\newtheorem{thm}{Theorem}
\newtheorem{lemma}[thm]{Lemma}
\newtheorem{cor}[thm]{Corollary}
\newtheorem{exmp}[thm]{Example}
\newcommand{\er}{Erd\H{o}s-R\'{e}nyi }
\def\mdots{\vbox{\baselineskip=3pt \lineskiplimit=0pt 
\kern2pt \hbox{.}\hbox{.}\hbox{.}}}
\newcommand{\myvdots}{\phantom{1}\llap{\mdots\hspace{0.25ex}}}
\newcommand{\bigdots}{\multirow{2}{*}{\vdots}}
\newcommand{\ob}[1]{\begin{array}{|c|}\hline ~\makebox[0pt]{$#1$}~\end{array}}
\newcommand{\mi}[1]{\begin{array}{|c|}~\makebox[0pt]{$#1$}~\end{array}}
\newcommand{\un}[1]{\begin{array}{|c|}~\makebox[0pt]{$#1$}~\\\hline\end{array}}
\newcommand{\LiKl}[1]{\delimitershortfall=-4pt\ldelim({#1}{0pt}}
\newcommand{\ReKl}[1]{\delimitershortfall=-4pt\rdelim){#1}{0pt}}
\title{A unifying framework for fast randomization of ecological networks with fixed (node) degrees}
\author[1]{C. J. Carstens}
\author[2]{A. Berger}
\author[3]{G. Strona}
\affil[1]{Korteweg-de Vries Institute for Mathematics, University of Amsterdam, Amsterdam, The Netherlands}
\affil[2]{Institute of Computer Science, Martin Luther University Halle-Wittenberg, Halle (Saale), Germany}
\affil[3]{European Commission, Joint Research Centre, Directorate D - Sustainable Resources - Bio-Economy Unit, Ispra, Italy}
\begin{document}
\maketitle

\begin{abstract}
The switching model is a Markov chain approach to sample graphs with fixed degree sequence uniformly at random. The recently invented Curveball algorithm \cite{Strona2014} for bipartite graphs applies several switches simultaneously (`trades'). Here, we introduce Curveball algorithms for simple (un)directed graphs which use single or simultaneous trades. We show experimentally that these algorithms converge magnitudes faster than the corresponding switching models. 
\smallskip

\noindent \textbf{Keywords:} \emph{Curveball algorithm, random networks, graphs with fixed degree sequences, matrices with fixed column sums, contingency tables with fixed margins.} 
\end{abstract}

\section{Introduction}
The uniform sampling of bipartite, directed or undirected graphs (without self-loops and multiple edges) with fixed degree sequence has many applications in network science \cite{MolloyReed1995,NewmanStrogatzWatts2001,Artzy-Randrup2005,CarstensCompleNet2014,gotelli2009}. In this paper we focus on Markov chain approaches to this problem, where a graph is randomised by repeatedly making small changes to it. Even though several Markov chains have been shown to converge to the uniform distribution on their state space \cite{Rao1996,Artzy-Randrup2005,Verhelst2008,CarstensPhysRevE}, the main question for both theoreticians and practitioners remains unanswered: that is, in all but some special cases it is unknown how many changes need to be made, i.e. how many steps the Markov chains needs to take, in order to sample from a distribution that is close to uniform. 

The best known Markov chain approach for sampling graphs with fixed degree sequence is the \emph{switching model}\footnote{Also known as rewiring, switching chain and swapping edges.} \cite{Ryser1957,Taylor1981,Rao1996,Maslov2002}. It finds an approximately uniform sample of bipartite graphs, undirected graphs or directed graphs with given vertex degrees, by repeatedly switching the ends of non-adjacent edge pairs. This simple yet flexible approach converges to the uniform distribution if implemented correctly. Furthermore, this chain was proven as \emph{fully polynomial almost uniform sampler} for the following classes of graphs: regular, half-regular and irregular with bounded degrees \cite{Cooper2012,Greenhill2011,Miklos2013,Greenhill2015,Erdos16}. However, even for these classes of graphs, the theoretically proven mixing time is much too large to use in practice, e.g. $O(d^{24} n^9 log(n))$ for regular graphs with degree $d$ \cite{Greenhill2011}. Notice that the fully polynomial uniform sampler of Jerrum et al.~\cite{JerrumSinclairVigoda04} for perfect matchings can be used to sample all graphs with fixed degree sequence in polynomial time in transforming the fixed degree sequence problem in a perfect matching problem via an approach of Tutte \cite{Tutte52}. Bez\'{a}kov\'{a} et al introduced a chain extending the idea of Jerrum et al \cite{Bezakova2007}. However, the theoretical proven mixing times are much too large in practice and furthermore, this approach is more difficult to implement.

In this paper we analyse and further develop a \emph{different} Markov chain approach: the Curveball algorithm \cite{Strona2014}, which randomises bipartite graphs and directed graphs with self-loops. Experimentally, this chain has been shown to mix much faster than the corresponding switching chain \cite{Strona2014}. The intuition behind the Curveball algorithm mixing faster than the switching model can be understood when thinking of both algorithms as games in which kids trade cards. That is, think of the Curveball algorithm as an algorithm that randomises the binary $n \times m$ bi-adjacency matrix of a bipartite graph. Imagine that each row of the adjacency matrix corresponds to a kid, and the $1$'s in each row correspond to the cards owned by the kid. Then at each step in the Curveball algorithm, two kids are randomly selected, and trade a number of their differing cards. Using this same analogy for the switching model, in each step two cards are randomly selected and traded if firstly they are different and secondly they are owned by different kids. Intuitively, the Curveball algorithm is clearly a more efficient approach to randomise the card ownership by the kids. More formally, the Curveball algorithm is also based on switches but instead of making one switch, several switches can be made in a single step. We show that this leads to possibly exponentially many graphs being reached in a single step, in contrast with the switching model where at most $O(n^4)$ (the maximum number of possible edge pairs) graphs can be reached in a single step.

Several algorithms closely related to the Curveball algorithm were discovered independently by Verhelst \cite{Verhelst2008}. In particular, Verhelst already made the critical change from switches to trades. The Curveball algorithm is briefly mentioned by Verhelst as a variation on his non-uniform sampling algorithms. However, he prefers a Metropolis-Hastings approach to obtain uniform samples, since intuitively it mixes faster. It is unclear if the added complexity of a single trade in this algorithm causes the overall algorithm to run faster. Verhelst furthermore introduces an algorithm similar to the Curveball algorithm that fixes the position and number of self-loops, and hence can be used to randomise directed graphs\footnote{Throughout this paper we use the convention that directed graphs do not contain self-loops or multiple edges.}.

Here, we propose two extensions of the Curveball algorithm: the Directed Curveball algorithm, which samples directed graphs and the Undirected Curveball algorithm, which samples graphs\footnote{Throughout this paper we use the convention that graphs do not contain self-loops or multiple edges.}. Our proposed algorithm for directed graphs differs from Verhelst's algorithm in the way it deals with induced cycle sets \cite{Berger2010}. By introducing these extensions, we show that, just like the switching model, the Curveball algorithm offers a flexible framework that can be used to randomise several classes of graphs. 

Furthermore, we propose a modification to the Curveball algorithm and the Directed Curveball algorithm, that further increases the number of states that can be reached in a single step. We refer to these algorithms as the Global Curveball algorithm and the Global Directed Curveball algorithm respectively. In the card game analogy, our modification corresponds to letting \emph{all} kids trade cards in pairs \emph{simultaneously} instead of letting only one pair of kids trade. 

We prove that both extensions of the Curveball algorithm, as well as our global directed Curveball algorithms, converge to the uniform distribution. We do so by showing that their Markov chains are ergodic (the underlying state graph is non-bipartite and connected) and the transition probabilities are symmetric (see \cite{Jerrum2003} for an overview on random sampling). Our proofs follow the approach in \cite{CarstensPhysRevE} where the original Curveball algorithm was proven to converge to the uniform distribution. 

We show experimentally that the introduced Curveball algorithms all tend to mix magnitudes faster than the respective switching models. However, even though experimentally it is clear that the Curveball algorithm outperforms the switching model, we do not have a theoretical justification for this. In fact, it turns out that the techniques used to prove fast mixing for the switching chain can not be transferred to the Curveball algorithm. Hence we present the question of fast mixing for Curveball algorithms as an interesting open problem. In our opinion, the Curveball algorithm provides a big opportunity and step forward to fast mixing Markov chains for the sampling of graphs with fixed degree sequence. 

The remainder of this paper is organised as follows. Section \ref{sec:CB_ext} first discusses the original Curveball algorithm in terms of adjacency lists, it then introduces two extensions of the Curveball algorithm: the Directed Curveball algorithm that randomises directed graphs, and the Undirected Curveball algorithm that randomises graphs. Furthermore it introduces our modification to the Curveball algorithm and Directed Curveball. In Section \ref{sec:unbiased} we prove that under mild conditions, all proposed algorithms converge to the uniform distribution. Section \ref{sec:mixing_times} presents our experimental results on the run-times of all proposed algorithms. Furthermore we analyse why the proof of rapid mixing for the switching model can not be used for the Curveball algorithm. Finally we discuss our conclusions and recommendations for further research in Section \ref{sec:concl}.

\section{The Curveball algorithm and its extensions}
\label{sec:CB_ext}

We start with a formal definition. Given two lists $(a_1, \dots, a_n)$ and $(b_1, \dots, b_{n'})$ of non-negative integers, the \emph{realization problem for bipartite fixed degree sequence} asks whether there is a labelled bipartite graph, $G=(V, U, E)$, such that all vertices $v_1, \dots,v_n$ with $v_i \in V$ have degree $a_i$ and all vertices $u_1,...,u_{n'}$ with $u_i\in U$ degree $b_i$. Analogously, the \emph{realisation problem for directed fixed degree sequence} asks for a labelled directed graph for a list $(a_1,b_1),\dots,(a_n,b_n)$, and the \emph{realisation problem for undirected fixed degree sequence} for a graph for given list $(a_1,\dots,a_n).$ For an overview about these problems see~\cite{Berger2012}. The corresponding graphs or lists for each problem are called \emph{realisations} or \emph{degree sequences}, respectively.

The Curveball algorithm, as introduced in \cite{Strona2014,CarstensPhysRevE}, is a Markov chain approach to the uniform random sampling of a realisation with bipartite fixed degree sequence.  Given one such realization, the Curveball algorithm finds others by repeatedly making small changes to the \emph{adjacency list representation of the bipartite graph.} 

The adjacency list representation \cite{Jungnickel1999} of a bipartite graph $G=(V,U,E)$ is a set of lists $A_i$, one for each vertex $v_i \in V$. The list $A_i$ contains the indices $j$ corresponding to the neighbours $u_j$ of $v_i$ (see Figure \ref{fig:adj_lists}(a))\footnote{A bipartite graph can also be represented by sets $A_i$ corresponding to neighbours of the vertex $u_i$. Depending on the degree sequence, the Curveball algorithm may run faster on this representation.}. The adjacency list representation of all graphs discussed in this paper are in fact sets of \emph{sets}. We will therefore from now on refer to this representation as the \emph{adjacency set representation}. 

\begin{figure}[htb]
\centerline{\includegraphics[width=120mm]{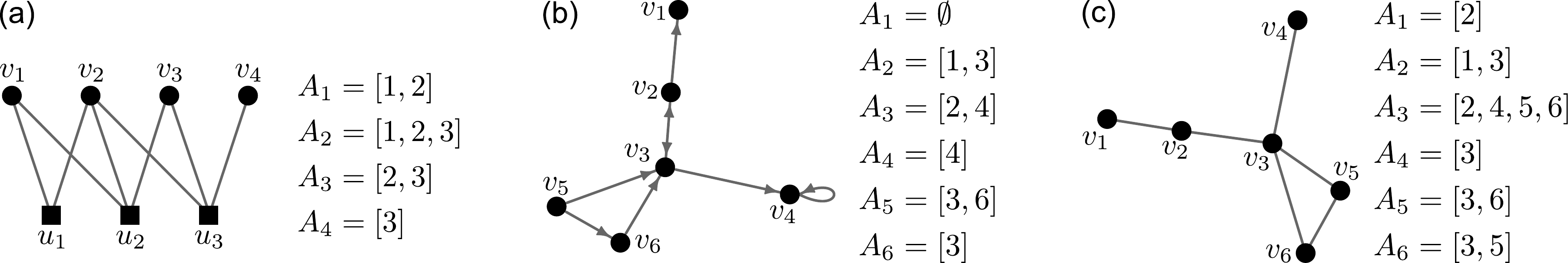}}
\caption{\label{fig:adj_lists} The adjacency set representations of (a) a bipartite graph, (b) a digraph, and (c) a graph.}
\end{figure}

The Curveball algorithm randomises the adjacency sets of a bipartite graph using the following steps. (a) Select two sets $A_i$ and $A_j$ at random. (b) Let $A_{i-j}$ be all indices that are in $A_i$ but not in $A_j$, i.e. $A_{i-j} = A_i \setminus A_j$. Similarly define $A_{j-i} = A_j \setminus A_i$. (c) Create new sets $B_i$ by removing $A_{i-j}$ from $A_i$ and adding the same number of elements randomly chosen from $A_{i-j} \cup A_{j-i}$. Combine $A_j \setminus A_{j-i}$ with the remaining elements of $A_{i-j} \cup A_{j-i}$ to form $B_j$. (d) Reiterate step (a)-(c) $N$ times, for a certain fixed number $N$. 

We follow \cite{CarstensPhysRevE} and refer to one iteration of steps (a)-(c) as a \emph{trade} and the number of exchanged indices $|B_i \setminus A_i | = |B_j \setminus A_j |$ as the \emph{size of the trade}. Notice that trades can be of size zero and such trades correspond to repeating the current state in the Markov chain. Furthermore, note that each switch in the switching model for directed graphs equals a trade of size one in the Curveball algorithm, as was shown in \cite{CarstensCompleNet2014,Verhelst2008}. However, the Curveball algorithm in addition allows trades of larger size. 


As discussed in \cite{CarstensPhysRevE}, the Curveball algorithm can be used to sample directed graphs with at most one self-loop per vertex (without multiple edges) with fixed in- and out-degrees. The adjacency set representation of a digraph with self-loops consists of sets $A_i$ corresponding to the out-neighbours of a vertex $v_i$ (Figure \ref{fig:adj_lists}(b))\footnote{It is also possible to use a representation based on \emph{in-neighbours}. In this case, the sets $A_i$ correspond to the in-neighbours of $v_i$. Depending on the digraph that is being randomised, the Curveball algorithm may be more efficient when using the in-neighbour representation.}.

We now introduce two extensions of the Curveball algorithm: The \emph{Directed Curveball algorithm} randomises directed graphs for a fixed degree sequence $S $ $=$ $(a_1$, $b_1)$, $ \dots$, $(a_n, b_n)$ (realisation problem for directed fixed degree sequence) and the \emph{Undirected Curveball algorithm} randomises graphs for a fixed degree sequence $S = (a_1,...,a_n)$ (realisation problem for undirected fixed degree sequence). 

\subsection{The Directed Curveball algorithm}
\label{sec:CB_simple}
The Directed Curveball algorithm randomises directed graphs by randomising their adjacency set representation. The adjacency set representation of a digraph is the same as that of directed graphs with self-loops, except that it has the property $i \notin A_i$ for all $i$, since directed graphs do not contain self-loops. The \emph{Directed Curveball algorithm} differs from the Curveball algorithm in step (b) only. In the Directed Curveball algorithm, the set $A_{i-j}$ is defined as all elements in $A_i$ not in $A_j$ \emph{and not equal to $j$}, i.e. $A_{i-j} := A_i \setminus (A_j \cup \{j\})$. The set $A_{j-i}$ is defined analogously by $A_{j-i} := A_j \setminus (A_i \cup \{i\})$. This small change ensures no self-loops are introduced while randomising directed graphs. Figure \ref{fig:simple_cb} illustrates the Directed Curveball algorithm. 

Notice that for the Directed Curveball algorithm trades can be of size zero and such trades correspond to repeating a state in the Markov chain. The Lemma below shows that all switches in the switching model for directed graphs equal trades of size one in the Directed Curveball algorithm. But, the Directed Curveball algorithm in addition allows trades of larger size. 

\begin{lemma}
\label{lem:subgraph_simple_dir}
Any switch in a digraph is a trade of size one in step (c) of the Directed Curveball algorithm. 
\begin{proof} 
Let $(x,y)$ and $(u,v)$ be arcs in a digraph $G$ that are allowed to be switched. Then $x$ can not be equal to $v$ and $u$ can not be equal to $y$ since otherwise this switch would introduce a self-loop. Furthermore $v \notin A_x$ and $y \notin A_u$ since otherwise the resulting digraph would have multiple edges. In particular this implies $y \in A_{x-u}$ and $v \in A_{u-x}$. Now if row $x$ and row $u$ are selected for a trade, then $B_x = (A_x \setminus \{y\}) \cup \{v\}$ and $B_u = (A_u \setminus \{v\}) \cup {y}$ are possible sets in step (c) that lead to exactly the two new arcs $(x,v)$ and $(u,y)$.
\end{proof}
\end{lemma}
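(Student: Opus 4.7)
The statement says that every single switch on a digraph $G$ can be reproduced as a size-one trade of the Directed Curveball algorithm acting on two specific rows of the adjacency set representation. The plan is to translate the admissibility conditions for a switch into containment relations for the sets $A_{x-u}$ and $A_{u-x}$, and then exhibit the target $B_x, B_u$ as a legal output of step (c).

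First I would fix the switch: two arcs $(x,y)$ and $(u,v)$ in $G$ that are replaceable by $(x,v)$ and $(u,y)$. From the switching model's constraints I would extract the following four atomic facts: (i) $x \neq v$, (ii) $u \neq y$ — both forced because otherwise the switched arcs $(x,v)$ or $(u,y)$ would be self-loops; and (iii) $v \notin A_x$, (iv) $y \notin A_u$ — both forced because otherwise the switched digraph would contain parallel arcs. Note that also $y \in A_x$ and $v \in A_u$ by definition of the original arcs.

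Next I would combine these facts with the definitions $A_{x-u} := A_x \setminus (A_u \cup \{u\})$ and $A_{u-x} := A_u \setminus (A_x \cup \{x\})$. Using $y \in A_x$, $y \notin A_u$, and $y \neq u$, I conclude $y \in A_{x-u}$; symmetrically $v \in A_{u-x}$. Thus when the algorithm selects the pair of rows $(x,u)$, the elements $y$ and $v$ belong to the respective "exclusive" portions that are eligible for swapping.

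Finally I would exhibit the explicit size-one trade: set $B_x := (A_x \setminus \{y\}) \cup \{v\}$ and $B_u := (A_u \setminus \{v\}) \cup \{y\}$. This is one of the legal choices in step (c), since we remove a single element of $A_{x-u}$ from row $x$ and replace it with a single element drawn from $A_{x-u}\cup A_{u-x}$ (and correspondingly for $u$), so the trade has size exactly one. The resulting adjacency sets encode precisely the digraph obtained by the original switch, which completes the argument. I do not anticipate a real obstacle here; the only thing to be careful about is keeping track of the four nonadjacency/noncoincidence conditions that together guarantee $y \in A_{x-u}$ and $v \in A_{u-x}$, since omitting any one of them would break the membership needed for the trade to be legal.
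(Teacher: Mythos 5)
Your proposal is correct and follows essentially the same argument as the paper's own proof: extract the no-self-loop conditions ($x\neq v$, $u\neq y$) and no-multiple-edge conditions ($v\notin A_x$, $y\notin A_u$), deduce $y\in A_{x-u}$ and $v\in A_{u-x}$, and exhibit the explicit size-one trade $B_x=(A_x\setminus\{y\})\cup\{v\}$, $B_u=(A_u\setminus\{v\})\cup\{y\}$. Your version is if anything slightly more explicit in unpacking the definition of $A_{x-u}$ to justify the membership claims.
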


\begin{figure}[htb]
\centerline{\includegraphics[width=120mm]{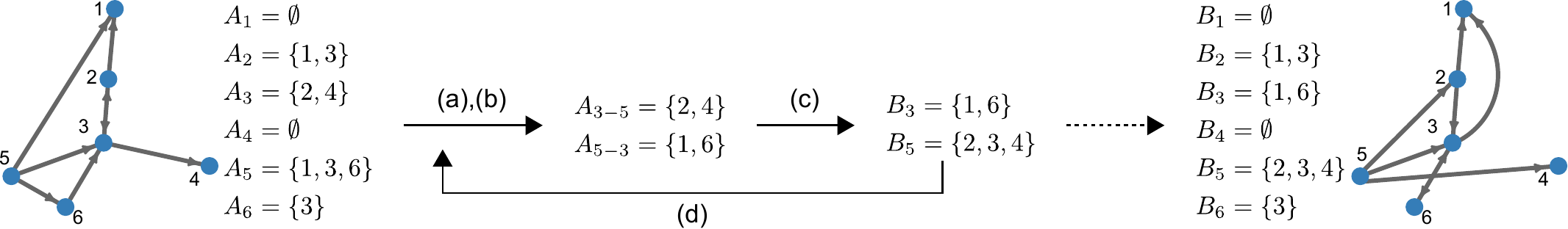}}
\caption{\label{fig:simple_cb}Illustration of the Directed Curveball algorithm. In this example, the vertices $v_3$ and $v_5$ are selected in step (a). In step (b) their entries are compared to create the sets $A_{3-5}$ and $A_{5-3}$. Notice that 3 is removed from $A_5$ since trading it would introduce a self-loop at vertex $v_3$. In step (c) the new sets $B_3$ and $B_5$ are constructed by randomly redistributing the elements $1,2,4$ and $6$. Step (d) repeats steps (a)-(c) $N$ times. The dashed arrow shows the result of the single trade made in steps (a)-(c).}
\end{figure}

\subsection{The Undirected Curveball algorithm}
\label{sec:CB_simple_undir}
The Undirected Curveball algorithm samples graphs with fixed degree sequence. The adjacency set representation of a graph $G=(V,E)$ is a set of sets $A_i$. The set $A_i$ now contains the indices of the neighbours of vertex $v_i$ (Figure \ref{fig:adj_lists}(c)). The symmetry of a graph is reflected in its adjacency set representation. That is, $i$ is an element of $A_j$ if and only if $j$ is also an element of $A_i$. Furthermore, these sets have the property that $i \notin A_i$ for all $i$, since graphs do not contain self-loops. We introduce the Undirected Curveball algorithm. This algorithm randomises the adjacency set representation of a graph while maintaining its symmetry and ensuring no self-loops are introduced. 

The Undirected Curveball algorithm is defined by the following steps. (a) Randomly select two sets $A_i$ and $A_j$. (b) Let $A_{i-j}$ be the set of elements in $A_i$ not in $A_j$ and not equal to $j$, i.e. $A_{i-j} := A_i \setminus (A_j \cup \{j\})$. Analogously define $A_{j-i} := A_j \setminus (A_i \cup \{i\})$. (c) Create a new set $B_i$ by removing $A_{i-j}$ from $A_i$ and adding the same number of elements randomly chosen from $A_{i-j} \cup A_{j-i}$. Combine $A_j \setminus A_{j-i}$ with the remaining elements of $A_{i-j} \cup A_{j-i}$ to form $B_j$. (c$^\prime$) For each index $k \in B_i \setminus A_i$, replace $j$ by $i$ in $B_k$, similarly for each $l \in B_j \setminus A_j$, replace $i$ by $j$ in $B_l$. (d) Reiterate step (a)-(c$^\prime$) $N$ times, for a certain fixed number $N$. Figure \ref{fig:simple_undir_cb} illustrates the Undirected Curveball algorithm. 

Step (b) ensures no self-loops are introduced. Step (c$^\prime$) is well-defined since $k \in B_i \setminus A_i$ implies $k \notin A_i$ and $k \in A_j$. This in turn implies that $i \notin A_k$ and $j \in A_k$ by symmetry of the adjacency sets. Thus we can replace $j$ by $i$ in $A_k$ to obtain $B_k$. Similarly $l \in B_j \setminus A_j$ implies that $i$ is an element of $A_l$ and that $j$ is not. And thus, replacing $i$ by $j$ in $A_l$ is well-defined. Step (c$^\prime$) thus ensures that $B$ represents a graph.  

\begin{figure}[!htb]
\centerline{\includegraphics[width=120mm]{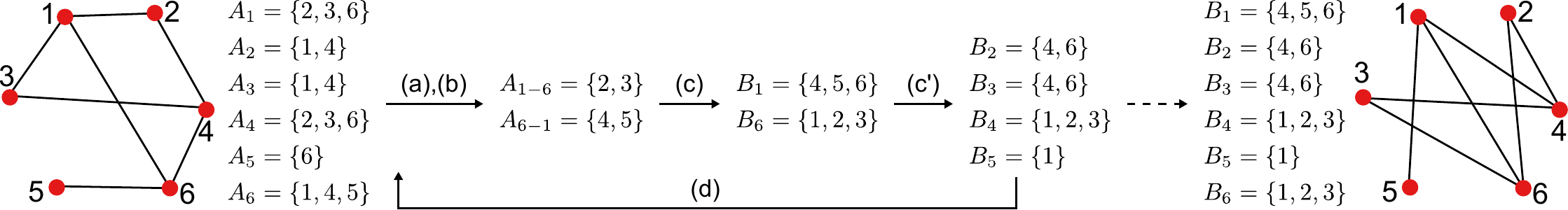}}
\caption{\label{fig:simple_undir_cb}Illustration of the Undirected Curveball algorithm. In this example the sets $A_1$ and $A_6$ are selected in step (a). In step (b) their entries are compared to create the sets $A_{1-6}$ and $A_{6-1}$. In step (c) the new sets $B_1$ and $B_6$ are constructed by randomly redistributing the elements $2,3,4$ and $5$. Step (c') updates the sets corresponding to indices involved in the trade. In this case we need to update $B_2$ and $B_3$ by removing $1$ and inserting $6$, conversely $B_4$ and $B_5$ are updated by removing $6$ and inserting 1. Step (d) repeats steps (a)-(c') $N$ times. The dashed arrow shows the result of the single trade illustrated by steps (a)-(c').}
\end{figure}

Notice that the Undirected Curveball algorithm includes trades of size zero which correspond to repeating the current state in the Markov chain. Furthermore, Lemma \ref{lem:switch_equals_trade_undir} shows that any switch in the switching model for graphs corresponds to a trade of size one in the Undirected Curveball algorithm. In fact, Figure \ref{fig:undirCB} shows that for each switch in the switching model, there are two different trades of size one in the Undirected Curveball algorithm.

\begin{lemma}
\label{lem:switch_equals_trade_undir}
Let $G, G'$ be graphs that differ by a switch. There are two trades of size one in the Undirected Curveball algorithm from $G$ to $G'$. 
\begin{proof}
Without loss of generality we may assume that $G=(V,E)$ and $G'=(V,E')$ differ by a switch from $\{x,y\}$ and $\{u,v\}$ to $\{x,v\}$ and $\{u,y\}$ (see Figure \ref{fig:undirCB}). Let $\{A_1,\dots,A_n\}$ be the adjacency set representation of $G$, then $y \in A_{x-u}$ since the edge $\{x,y\}$ is an edge of $G$, the edge $\{u,y\}$ is not and $y$ can not be equal to $u$ since $\{u,y\} \in E'$ and $G'$ has no self-loops. Similarly we find that $v \in A_{u-x}$ and hence the trade that swaps $y$ and $v$ between rows $x$ and $u$ results in the graph $G'$. Similarly, there is a second trade which generates $G'$, namely the trade that exchanges $x$ and $u$ between sets $A_y$ and $A_v$.
\end{proof}
\end{lemma}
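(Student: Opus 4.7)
The plan is to exhibit two explicit size-one trades in the Undirected Curveball algorithm that both convert $G$ into $G'$, and then to verify that these two trades are distinct. Write the switch, without loss of generality, as sending the pair of edges $\{x,y\},\{u,v\}$ to $\{x,v\},\{u,y\}$. Because the switch is legal and nontrivial in a simple graph, the four vertices $x,y,u,v$ are pairwise distinct, neither of the new edges is a self-loop, and neither of the new edges was already present in $G$. Translated into the adjacency-set representation of $G$, these facts give the concrete non-memberships $v\notin A_x$, $y\notin A_u$, $u\notin A_y$, $x\notin A_v$, which are exactly what is needed to invoke the definitions of $A_{i-j}$ and $A_{j-i}$ in step (b).

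First I would consider the trade that selects the pair $(i,j)=(x,u)$ in step (a). I would check that $y\in A_{x-u}$, using $y\in A_x$, $y\notin A_u$, and $y\neq u$, and symmetrically that $v\in A_{u-x}$. Hence the size-one trade in step (c) that moves $y$ out of $A_x$ into $A_u$ and $v$ out of $A_u$ into $A_x$ is a legal outcome. Then I would trace step (c$^\prime$): the newly appearing $v$ in $B_x\setminus A_x$ forces us to overwrite $u$ by $x$ in $A_v$, and the newly appearing $y$ in $B_u\setminus A_u$ forces us to overwrite $x$ by $u$ in $A_y$. Reading off the resulting adjacency sets, one sees precisely the representation of $G'$.

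Second I would run the mirror-image argument for the pair $(i,j)=(y,v)$, showing that $x\in A_{y-v}$ and $u\in A_{v-y}$, so the size-one trade swapping $x$ and $u$ between $A_y$ and $A_v$ is legal and, after the analogous update in step (c$^\prime$), also yields $G'$. Since $\{x,u\}\neq\{y,v\}$ by the pairwise distinctness of the four endpoints, this second trade is genuinely different from the first, supplying the two trades required by the statement.

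The bulk of the work is bookkeeping rather than anything conceptually subtle: checking all of the non-memberships and non-equalities that place $y$ in $A_{x-u}$ and $v$ in $A_{u-x}$, and then carefully chasing step (c$^\prime$) to be sure that the only sets touched are $A_y$ and $A_v$, with exactly the right substitutions. The one point I would take care to flag at the outset, and which is the closest thing to an obstacle, is the pairwise distinctness of $x,y,u,v$: it is implicit in the simple-graph switching model but must be stated explicitly, since it underlies both the validity of the size-one trades and the fact that the two trades are distinct.
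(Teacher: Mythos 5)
Your proposal is correct and follows essentially the same route as the paper's proof: it identifies the same two size-one trades (one between $A_x$ and $A_u$ exchanging $y$ and $v$, and one between $A_y$ and $A_v$ exchanging $x$ and $u$) and verifies the same membership conditions. Your additional explicit tracing of step (c$^\prime$) and of the pairwise distinctness of $x,y,u,v$ is careful bookkeeping the paper leaves implicit, but it does not change the argument.
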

  
\begin{figure}[!htb]
\center
\includegraphics[width=120mm]{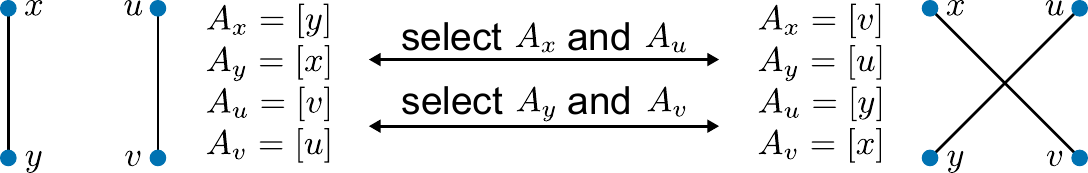}%
\caption{\label{fig:undirCB}A switch corresponds to a trade of size one in the Undirected Curveball algorithm. Notice that each switch can be realized by two distinct trades: the switch from $\{x,y\}$ and $\{u,v\}$ to $\{x,v\}$ and $\{u,y\}$ can be realized by selecting $A_x$ and $A_u$ or by selecting $A_y$ and $A_v$. }
\end{figure}

Analogous to the other versions of the Curveball algorithm, the Undirected Curveball algorithm in addition allows trades of larger size, corresponding to making several switches at once.

\subsection{The global directed Curveball algorithms}\label{sec:modified_curveball}
We now introduce the Global Curveball algorithm and the Global Directed Curveball algorithm as a modification of the Curveball algorithm and the Directed Curveball algorithm respectively. The number of graphs that can be reached by a single step in the Markov chain of these \emph{global} Curveball algorithms is even higher than for the regular Curveball algorithms. This modification is motivated by our desire to improve the Curveball algorithm in situations where all trades correspond to switches, i.e. larger trades cannot happen. This happens for instance when a bi-adjacency matrix represents a perfect matching, i.e. row and column sums are one. In this case, each trade has at most size one, and hence corresponds to either a switch or a repeated state. 

As explained in the introduction, instead of attempting trades between two sets (kids) in the adjacency set representation, the global algorithms allow each of the sets to trade in pairs. More formally, the Global Curveball algorithm and the Global Directed Curveball algorithm are defined as follows. We replace step (a) in the Curveball algorithm and Directed Curveball algorithm by the following step: For all lists $A_1, \dots A_n$ choose uniformly at random a 2-partition $(A_{i_{1}},A_{i_{2}})$, $(A_{i_{3}},A_{i_{4}})$, $\dots$, $(A_{i_{n-1}},A_{i_{n}})$ for even $n$, or, $(A_{i_{1}}, A_{i_{2}})$, $(A_{i_{3}},A_{i_{4}})$, $\dots$, $(A_{i_{n-2}},A_{i_{n-1}})$, $(A_{i_{n}})$ for odd $n$. For each pair $(A_{i_{k}},A_{i_{k+1}})$ apply steps (b)-(c). Step (d) again reiterates steps (a)-(c) $N$ times. We refer to one iteration of steps (a)-(c) as a \emph{global trade} in analogy to the term trade for the Curveball algorithm. In the Appendix we present Algorithm~\ref{AlgorithmTwoPartition} for determining a uniform $2$-partition in $O(n)$ asymptotic runtime. 

In our example of a perfect matching above, the Global Curveball algorithm has an exponential number of possible transitions for each realisation: each pair of the $(n-1)\cdot (n-3) \cdots 3 \cdot 1$ possible partitions allows $2^{\nicefrac{n}{2}}$ different global trades, since all $\nicefrac{n}{2}$ adjacency list pairs allow two trades (a switch or a trade of size zero). This exponential number of possible transitions is in contrast to only a quadratic number of transitions in the Curveball algorithm: each transition corresponding to a single switch for each of the $\binom{n}{2}$ possible adjacency list pairs.

A similar modification to the Undirected Curveball algorithm is not possible. A trade between two sets affects additional sets in step ($c'$), and hence a trade between a pair of sets is not independent of trades between other pairs of sets. 

\section{Convergence to the uniform distribution}
\label{sec:unbiased}

A Markov chain can be seen as a \emph{random walk} \cite{Lovasz1996} on a set $\Omega$ of combinatorial objects, the so-called \emph{states}. Two states $x,y \in \Omega$ are connected via a transition edge $(x, y)\in \Psi$, when $x$ can be transformed into $y$ via a small local change. For the switching chain such a `local change' corresponds to a switch, and in the Curveball algorithms to one trade. For both algorithms, the states of $\Omega$ are all realisations of a fixed degree sequence. This definition induces a so-called directed \emph{state graph} $\Gamma = (\Omega,\Psi)$, representing the states and how they are connected by local changes. A step from $x$ to $y$ in a random walk is done with \emph{transition probability} $p_{xy}.$  

In \cite{CarstensPhysRevE} the Curveball algorithm was proven to converge to the uniform distribution by applying the fundamental theorem for Markov chains (see for example \cite{Levin2009}).
 

\begin{thm}
\label{thm:MC_stationary}
A finite Markov chain converges to its unique stationary distribution if its state graph $\Gamma = (\Omega,\Psi)$ is connected and non-bipartite. If there exists a probability distribution $\pi:\Omega \mapsto [0,1]$ such that the detailed balanced equations, $\pi(x)p_{xy}=\pi(y)p_{yx}$, are satisfied for all $(x,y) \in \Psi,$ then $\pi$ is the unique stationary distribution.
\end{thm}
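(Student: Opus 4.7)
The plan is to derive both halves of the statement from standard facts about finite Markov chains, translating the graph-theoretic hypotheses (connected, non-bipartite) into the usual chain-theoretic hypotheses (irreducible, aperiodic) and then invoking the Perron--Frobenius / fundamental theorem.

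First I would observe that connectedness of the state graph $\Gamma=(\Omega,\Psi)$ is exactly irreducibility of the chain: since each edge of $\Gamma$ corresponds to a positive-probability transition, a path from $x$ to $y$ in $\Gamma$ exhibits some $k$ with $(P^{k})_{xy}>0$. Next I would deduce aperiodicity from non-bipartiteness. Non-bipartiteness gives a closed walk of odd length through some vertex $x_0$, while connectedness (traverse any edge and come back) supplies closed walks of even length; the gcd of all return-time lengths at $x_0$ is therefore $1$, which is aperiodicity, and by irreducibility the period is the same at every state. With irreducibility and aperiodicity established for a finite chain, the fundamental theorem for Markov chains (e.g.\ \cite{Levin2009}) immediately gives the existence of a unique stationary distribution $\pi^{\star}$ and convergence $P^{k}(x,\cdot)\to\pi^{\star}$ in total variation as $k\to\infty$.

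For the second half, I would check that any probability distribution $\pi$ obeying detailed balance is stationary by summing the detailed balance relation over $x$:
\begin{equation*}
\sum_{x\in\Omega}\pi(x)p_{xy} \;=\; \sum_{x\in\Omega}\pi(y)p_{yx} \;=\; \pi(y)\sum_{x\in\Omega}p_{yx} \;=\; \pi(y),
\end{equation*}
so $\pi P=\pi$. Since the first half of the theorem has already produced a \emph{unique} stationary distribution $\pi^{\star}$, one concludes $\pi=\pi^{\star}$.

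The statement is essentially a repackaging of textbook Markov chain theory, so I do not expect a real technical obstacle. The only mild bookkeeping is to make sure that ``connected non-bipartite state graph'' really does correspond to ``irreducible aperiodic chain'' once the edges of $\Gamma$ are defined as the positive-probability transitions; in the Curveball variants this is in fact doubly safe, since size-zero trades give self-loops at every state and thus supply aperiodicity even without appealing to an odd-cycle argument.
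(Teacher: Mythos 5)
Your proposal is correct, and it is worth noting that the paper itself offers no proof of this statement at all: Theorem \ref{thm:MC_stationary} is quoted as the fundamental theorem for finite Markov chains with a citation to Levin, Peres and Wilmer, so there is no in-paper argument to diverge from. Your derivation is the standard one that the citation points to: connectedness of the state graph gives irreducibility, an odd closed walk plus the back-and-forth traversal of an edge gives period $1$ at one state and hence (by irreducibility) at all states, the convergence theorem for finite irreducible aperiodic chains yields a unique $\pi^{\star}$ with $P^{k}(x,\cdot)\to\pi^{\star}$, and summing the detailed balance relation over $x$ shows any reversible $\pi$ is stationary and therefore equals $\pi^{\star}$. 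The only point deserving a word of care is that the paper's state graph is formally \emph{directed}, so ``connected'' must be read as strongly connected and the edge-reversal step in your even-walk argument needs $p_{yx}>0$ whenever $p_{xy}>0$; in all the Curveball chains this holds because the transition probabilities are symmetric, and, as you observe, aperiodicity is in any case immediate there from the self-loops created by size-zero trades, which is exactly the route the paper takes when it applies the theorem.
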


Markov chains which fulfil these properties are called \emph{ergodic}. This theorem implies that an ergodic Markov chain converges to the uniform distribution if $p_{xy} = p_{yx}$ for all $x,y \in \Omega$. 

We now derive the conditions for which the Directed Curveball algorithm, the Undirected Curveball algorithm, the Global Curveball algorithm, and the Global Directed Curveball algorithm converge to the uniform distribution on their respective state spaces, i.e. the set of all possible solutions of the realisation problem. 

\subsection{Directed Curveball algorithm}
\label{sec:unbiased_simple}
We start by deriving the transition probabilities of the Directed Curveball algorithm. 

\begin{lemma}
\label{lem:prob_simple_dir}
Let $A$ and $B$ be two adjacency set representations of directed graphs with equal degree sequence. The transition probability $P_{AB}$ from $A$ to $B$, in the Directed Curveball algorithm, is given by 
\[ P_{AB} = \begin{cases} 
		\frac{2}{n(n-1)} \frac{s_i!s_j!}{(s_i + s_j)!} & \mbox{if } B \mbox{ only differs from } A \mbox{ in sets } A_i \mbox{ and } A_j,\\
		1 - \sum_{C, C \neq A} P_{AC} & \mbox{if } A = B, \\
		0            & \mbox{otherwise.} 
		\end{cases}\]   
where $s_i = |A_{i-j}|$ and $s_j = |A_{j-i}|$. Hence, $P_{AB} = P_{BA}$ for all $A, B$. 
\begin{proof}
The probability of transitioning from a state $A$ to another state $B$ that differs in a trade between sets $A_i$ and $A_j$ can be found as follows. The probability of selecting set $A_i$ and set $A_j$ equals the inverse of the number of pairs of sets in the adjacency set representation, i.e. $\nicefrac{2}{n(n-1)}$, where $n$ equals the number of sets. The probability that shuffling $A_{i-j} \cup A_{j-i}$ results in state $B$ equals the inverse of the number of ways you can select $s_i$ unordered elements from the set $A_{i-j} \cup A_{j-i}$ in step (c) of the algorithm. This probability equals $\nicefrac{s_i!s_j!}{(s_i+s_j)!}$ since $|A_{i-j} \cup A_{j-i}| = s_i + s_j$. 

To show that the probabilities $P_{AB}$ and $P_{BA}$ are equal for all adjacency sets $A$ and $B$ we only need to show that this is true in the non-trivial case when the adjacency sets differ exactly in two sets, say $A_i$ and $A_j$. 
$|B_i|$ equals $|A_i|$, and $|B_j|$ equals $|A_j|$ since trades do not change the number of elements. We find $|A_{i-j}|=|B_{i-j}|$ since $A_{i-j}=A_i \setminus \{A_j \cup \{j\}\}$, and $B_{i-j}=B_i \setminus \{B_j \cup \{j\}\}$. Similarly $|B_{j-i}| = |A_{j-i}|$, and indeed we find $P_{AB} = P_{BA}$.
\end{proof}
\end{lemma}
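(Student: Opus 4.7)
The plan is to decompose each transition into the two random choices the Directed Curveball algorithm makes in steps (a) and (c), and then multiply the resulting probabilities. For the non-trivial case in which $B$ differs from $A$ only in the sets indexed $i$ and $j$, step (a) must select exactly the unordered pair $\{A_i, A_j\}$, which happens with probability $1/\binom{n}{2} = \frac{2}{n(n-1)}$. Conditional on this selection, step (c) uniformly partitions the $s_i + s_j$ elements of $A_{i-j}\cup A_{j-i}$ into a subset of size $s_i$ going to $B_i$ and a subset of size $s_j$ going to $B_j$; exactly one of the $\binom{s_i+s_j}{s_i}$ equally likely outcomes produces $B$, contributing a factor $\frac{s_i!\,s_j!}{(s_i+s_j)!}$. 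Multiplying the two factors yields the displayed formula. The ``otherwise'' case is immediate because a single trade modifies at most two adjacency sets, and the entry for $A=B$ is then forced by the requirement that the row of transition probabilities sums to one.

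For the symmetry claim $P_{AB} = P_{BA}$, the only case to check is the non-trivial one, and here it suffices to show that $s_i$ and $s_j$ take the same values whether computed from $A$ or from $B$. My plan is to exploit the fact, built into the construction in step (c), that a trade preserves the multiset of indices distributed between rows $i$ and $j$: we have $A_i \cup A_j = B_i \cup B_j$ and $A_i \cap A_j = B_i \cap B_j$. Combined with $|A_i| = |B_i|$, this gives $|A_i \setminus A_j| = |B_i \setminus B_j|$. Since the absence of self-loops forces $j \notin A_i$ and $j \notin B_i$, the ``$\cup\{j\}$'' in the definition of $A_{i-j}$ is inert, and we conclude $|A_{i-j}| = |B_{i-j}|$; the same argument gives $|A_{j-i}| = |B_{j-i}|$. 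The formula is manifestly symmetric in $(s_i, s_j)$, so $P_{AB} = P_{BA}$ follows.

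I do not expect any substantive obstacle here: the lemma is essentially a bookkeeping exercise, and the whole proof fits onto a single page. The one point that requires a moment of care is ensuring that the self-loop exclusion (the removal of $\{j\}$ from the definition of $A_{i-j}$) does not break the cardinality comparison between $A$ and $B$; but as noted above, $j$ is never an element of either $A_i$ or $B_i$, so this exclusion contributes nothing in the non-trivial case.
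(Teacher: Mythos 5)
Your derivation of the transition probabilities follows the paper's proof essentially verbatim (pair selection with probability $\frac{2}{n(n-1)}$, then a uniform choice among the $\binom{s_i+s_j}{s_i}$ redistributions), and your route to symmetry via the invariants $A_i\cup A_j=B_i\cup B_j$, $A_i\cap A_j=B_i\cap B_j$, $|A_i|=|B_i|$ is a reasonable and slightly more explicit variant of the paper's one-line cardinality argument. However, the step you yourself single out as ``the one point that requires a moment of care'' is justified incorrectly. You claim that the absence of self-loops forces $j\notin A_i$ and $j\notin B_i$. That is false: $A_i$ is the set of out-neighbours of $v_i$, so $j\in A_i$ simply means the arc $(v_i,v_j)$ is present, which is perfectly legal in a loop-free digraph. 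The no-self-loop condition only gives $i\notin A_i$ and $j\notin A_j$. So the ``$\cup\{j\}$'' in the definition of $A_{i-j}$ is not inert in general.

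The conclusion you want, $|A_{i-j}|=|B_{i-j}|$, is still true, but for a different reason: the element $j$ is never part of the traded pool, since $j\notin A_{i-j}$ by definition and $j\notin A_{j-i}\subseteq A_j$ because $j\notin A_j$. Hence the trade cannot add $j$ to or remove $j$ from row $i$, so $j\in A_i$ if and only if $j\in B_i$, and the correction terms $[\,j\in A_i\,]$ and $[\,j\in B_i\,]$ in $|A_{i-j}|=|A_i\setminus A_j|-[\,j\in A_i\,]$ and $|B_{i-j}|=|B_i\setminus B_j|-[\,j\in B_i\,]$ cancel. With that repair your argument goes through and matches the paper's conclusion; as written, though, it rests on a false premise about the adjacency sets of a digraph.
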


We next discuss that connectance of Theorem \ref{thm:MC_stationary} is fulfilled for the Directed Curveball algorithm. Notice that Lemma \ref{lem:subgraph_simple_dir} implies that the state graph of the switching model for simple directed graphs is a subgraph of the state graph of the Directed Curveball algorithm because each switch is a trade of size one. Hence, since both Markov chains have the same state space, whenever the switching model for directed graphs has irreducible Markov chain then so does the Markov chain of the Directed Curveball algorithm. This leads to the following theorem. 

\begin{thm}\label{thm:concergence_directed_notConnected}
If the state graph corresponding to the switching chain for directed fixed degree sequences is connected, then the Markov chain of the Directed Curveball chain converges to its stationary distribution, which is the uniform distribution. 
\begin{proof}
The state graph of the switching model with respect to directed graphs is a subgraph of the state graph of the Directed Curveball algorithm (Lemma \ref{lem:subgraph_simple_dir}). Hence, a connected state graph of the switching chain implies a connected state graph of the Directed Curveball chain. The state graph of the Directed Curveball chain is always non-bipartite, since there is a non-zero transition probability $P_{AA}$ of repeating each state $A$ in step (c), due to trades of size zero. Finally $P_{AB} = P_{BA}$ for all states $A$ and $B$ (see Lemma \ref{lem:prob_simple_dir}). Hence convergence to the uniform distribution follows from Theorem \ref{thm:MC_stationary}.
\end{proof}
\end{thm}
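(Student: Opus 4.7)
My plan is to verify the three hypotheses of Theorem \ref{thm:MC_stationary} separately: connectedness of the state graph, non-bipartiteness (aperiodicity), and symmetry of transition probabilities (which together with the detailed balance equations will identify the uniform distribution as the stationary one).

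For connectedness, I would invoke Lemma \ref{lem:subgraph_simple_dir}, which shows that every switch between two directed graphs on a common degree sequence is realisable as a trade of size one in the Directed Curveball algorithm. Since the state space $\Omega$ is the same in both chains, the state graph of the switching chain is a subgraph of the state graph of the Directed Curveball chain. Hence, any path of switches between two realisations lifts to a path of trades, and connectedness of the switching chain's state graph transfers directly to the Directed Curveball chain.

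For non-bipartiteness, the cleanest route is to exhibit self-loops in the state graph. In step (c), whenever sets $A_i$ and $A_j$ are selected with $A_{i-j} \cup A_{j-i} \neq \emptyset$, one of the possible random redistributions returns the original partition, so $P_{AA} > 0$. If every pair $(A_i, A_j)$ yields $A_{i-j} \cup A_{j-i} = \emptyset$, then all sets are identical and the state space is a singleton (which is vacuously connected and non-bipartite). In either case the chain is aperiodic.

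Symmetry of transition probabilities is exactly the content of Lemma \ref{lem:prob_simple_dir}: for adjacency sets $A \neq B$ that differ only on sets indexed by $i$ and $j$, one has $P_{AB} = P_{BA} = \tfrac{2}{n(n-1)} \cdot \tfrac{s_i!\,s_j!}{(s_i+s_j)!}$, and transitions between states differing on more than two sets have probability zero on both sides. With $P_{AB} = P_{BA}$, the uniform distribution $\pi(x) = 1/|\Omega|$ trivially satisfies the detailed balance equations $\pi(x) p_{xy} = \pi(y) p_{yx}$. Plugging all three facts into Theorem \ref{thm:MC_stationary} yields the claim. There is no real obstacle here, since every ingredient is already available from the preceding lemmas; the only care required is in the non-bipartiteness step, where one must dispose of the trivial (and uninteresting) case in which no non-trivial trade is ever possible.
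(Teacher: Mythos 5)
Your proposal is correct and follows essentially the same route as the paper: connectedness via Lemma \ref{lem:subgraph_simple_dir}, non-bipartiteness via the self-loops created by size-zero trades, symmetry via Lemma \ref{lem:prob_simple_dir}, and then Theorem \ref{thm:MC_stationary}. Your extra handling of the degenerate case is harmless but unnecessary, since a size-zero trade (and hence $P_{AA}>0$) occurs for every state regardless of whether any non-trivial trade exists.
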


It is well-known that the switching model for directed graphs can have a reducible Markov chain \cite{Rao1996}. The simplest example being a directed cycle on three vertices, its opposite orientation can not be achieved by switches, since no switch is possible without introducing self-loops. We know of two approaches to mitigate this problem for switching chains: one is to introduce an additional move which reorients directed cycles of length three (hexagonal move in \cite{Rao1996}). This is the approach taken in \cite{Verhelst2008} to sample directed graphs. However, we follow a second approach that uses a pre-sampling step \cite{Berger2010}. We prefer this approach because the corresponding Markov chain runs on a (potentially much) smaller state graph compared with the triangle reorientation chain and hence should be faster. Furthermore, this approach is much easier to transfer to the Directed Curveball algorithm. 

To discuss this approach we need the definition of \emph{induced cycle sets} \cite{Berger2010} for a directed graph sequence $S$. An induced cycle set consists of three indices, $i_1$, $i_2$ and $i_3$, for pairs in $S$ such that the vertices $v_{i_{1}},v_{i_{2}},v_{i_{3}}$ form a directed cycle in \emph{each} directed graph realisation of $S$. 

Let $\Psi_S$ be the state graph of the switching model for directed graphs with fixed degree sequence $S$. Berger et al \cite{Berger2010} prove that $\Psi_S$ is non-connected if and only if $S$ contains an \emph{induced cycle set}. In fact, they prove that if $S$ contains $k$ induced cycle sets, then $\Psi_S$ consists of $2^k$ isomorphic components where each component corresponds to a specific orientation for all $k$ cycles. Hence, instead of introducing a triangle reorientation, Berger et al. \cite{Berger2010} choose one of the isomorphic components uniformly at random prior to running the switching model on this component. Notice that they also show that all induced cycle sets are disjunctive, i.e. at most $\nicefrac{n}{3}$ such cycle sets are possible.

\begin{thm}
The state graph of the Directed Curveball algorithm decomposes in $2^k$ isomorphic components, where $k \leq n$ is the number of induced cycle sets. If it is not connected $(k>0)$, then applying the Directed Curveball algorithm on any component leads to the uniform distribution of all states in this component.
\begin{proof}
The state graph for the Directed Curveball decomposes in at most $2^k$ isomorphic components because the state graph of the switching chain for directed sequences is a subgraph (Lemma~\ref{lem:subgraph_simple_dir}) of the state graph of the Directed Curveball algorithm, and the state graph of the switching chain decomposes in $2^k$ isomorphic components \cite{Berger2010}. All these components contain realisations which only differ in the orientation of triangles of induced cycle sets. The Directed Curveball algorithm basically applies switches, and is not able to change these triangles. Hence, the state graph of the Directed Curveball chain has exactly $2^k$ components, consisting of exactly the same states as the components of the switching chain state graph. Therefore trades must be identical in each component leading to isomorphic components. Using the non-bipartiteness of each component (proof of Theorem~\ref{thm:concergence_directed_notConnected}) and $P_{AB}=P_{BA}$ with Lemma~\ref{lem:prob_simple_dir}, it follows with Theorem~\ref{thm:MC_stationary} that the Directed Curveball algorithm on a component converges to the uniform distribution on all states in the component. 
\end{proof}
\end{thm}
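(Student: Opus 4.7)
The plan is to sandwich the Directed Curveball state graph $\Psi_S$ between the switching state graph and a natural partition induced by triangle orientations. Lemma~\ref{lem:subgraph_simple_dir} tells us that the switching state graph is a spanning subgraph of $\Psi_S$ (same vertex set $\Omega$, fewer edges). Combined with the Berger et al.\ decomposition result, this immediately gives an upper bound of $2^k$ connected components for $\Psi_S$: any two realisations reachable by switches are reachable by trades. For a matching lower bound, I would invoke the very definition of an induced cycle set: its three vertices carry the same directed triangle in \emph{every} realisation of $S$. Since a single Directed Curveball trade maps a realisation of $S$ to another realisation of $S$, the $k$ triangle orientations are invariants of the chain. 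Hence any two states in the same component of $\Psi_S$ agree on all $k$ orientations, yielding at least $2^k$ components and therefore exactly $2^k$.

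Next I would identify the components with those of the switching chain: both chains live on $\Omega$, and the mapping ``state $\mapsto$ its $k$-tuple of triangle orientations'' partitions $\Omega$ into $2^k$ classes, each of which is exactly one component of the switching chain and, by the paragraph above, exactly one component of $\Psi_S$. Isomorphism of the Directed Curveball components is then inherited from the switching-chain isomorphism of Berger et al.: the bijection between two components is obtained by reorienting a subset of the induced triangles, which is a relabelling that commutes with the set-theoretic operations defining a trade (the sets $A_{i-j}$, $A_{j-i}$, the shuffle in step (c), and the probability formula of Lemma~\ref{lem:prob_simple_dir}). Consequently, corresponding trades in the two components have equal transition probabilities, giving a genuine isomorphism of weighted Markov chains.

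For the convergence half of the statement, I would fix one component $\mathcal{C}$ and check the hypotheses of Theorem~\ref{thm:MC_stationary}. Connectedness of $\mathcal{C}$ was just established. Non-bipartiteness follows, exactly as in the proof of Theorem~\ref{thm:concergence_directed_notConnected}, from the fact that every state admits a trade of size zero, so $P_{AA}>0$ for all $A\in \mathcal{C}$. Symmetry $P_{AB}=P_{BA}$ is Lemma~\ref{lem:prob_simple_dir}, which instantly makes the uniform measure on $\mathcal{C}$ satisfy detailed balance. Theorem~\ref{thm:MC_stationary} then delivers convergence to the uniform distribution on $\mathcal{C}$.

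The main obstacle I expect is not the probabilistic half, which reduces cleanly to the previous lemmas, but the combinatorial bookkeeping in the first two paragraphs: one must argue that \emph{no} trade (not merely no switch) can break an induced triangle, and that the Berger-style isomorphism between orientation classes really is probability-preserving for trades of \emph{all} sizes, not just size one. Both follow from the definitions, but stating them carefully is what makes the proof go through.
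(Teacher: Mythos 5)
Your overall architecture is exactly the paper's: use Lemma~\ref{lem:subgraph_simple_dir} to make the switching state graph a spanning subgraph (upper bound of $2^k$ components via Berger et al.), show the $k$ triangle orientations are trade-invariants (lower bound, and identification of the components with the switching-chain components), inherit the isomorphisms, and then run Theorem~\ref{thm:MC_stationary} on one component using laziness for non-bipartiteness and Lemma~\ref{lem:prob_simple_dir} for $P_{AB}=P_{BA}$. The convergence half is fine and matches the paper verbatim in substance.

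The one place your reasoning as written does not go through is the justification of the invariance itself: ``since a single Directed Curveball trade maps a realisation of $S$ to another realisation of $S$, the $k$ triangle orientations are invariants of the chain'' is a non sequitur. The definition of an induced cycle set only guarantees that \emph{some} directed triangle sits on $v_{i_1},v_{i_2},v_{i_3}$ in every realisation; both orientations occur among realisations of $S$ (that is precisely why there are $2^k$ components), so ``the image of a trade is again a realisation'' cannot by itself pin down the orientation. You even flag at the end that one must argue that no trade can break an induced triangle, but then assert it ``follows from the definitions,'' which it does not. You need one of two concrete arguments: (a) a trade between $A_i$ and $A_j$ alters the out-neighbourhoods of only two vertices, whereas reversing a directed triangle requires altering the out-neighbourhoods of all three of its vertices; or (b) a trade of size $s$ between $A_i$ and $A_j$ decomposes into $s$ successive single exchanges, each of which is a valid switch producing a realisation (Lemma~\ref{lem:subgraph_simple_dir} in reverse), so every trade is a walk inside one component of the switching state graph, which by Berger et al.\ preserves all triangle orientations. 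The paper's phrase ``the Directed Curveball algorithm basically applies switches'' is shorthand for (b). With either argument inserted, the rest of your proof --- including the observation that the component isomorphisms are probability-preserving because the relabelling commutes with the set operations defining a trade, which is actually stated more carefully than in the paper --- is sound.
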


This theorem implies that choosing one component uniformly at random and applying the Directed Curveball algorithm on this component leads to a uniform distribution of all states. Hence, we propose the following Adjusted Directed Curveball algorithm for a directed graph $G$ with degree sequence $S$: (1) Identify all $k$ induced cycle sets in $S$. (2) Choose a random orientation for each induced cycle set in $G$, leading to a realisation $G'$ of $S$. (3) Use the Directed Curveball algorithm starting with $G'$. 

\begin{cor}
Let $G$ be a directed graph with directed graph sequence $S$. The Adjusted Directed Curveball algorithm converges to the uniform distribution on all directed graph realisations of $S$. \qed
\end{cor}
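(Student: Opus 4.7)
The plan is to show that the three-step Adjusted Directed Curveball algorithm induces a uniform distribution by combining the decomposition result from the preceding theorem with a simple product-probability calculation. First I would fix notation: let $\Omega_S$ denote the set of all directed graph realisations of $S$, and let $C_1,\dots,C_{2^k}$ be the $2^k$ isomorphic components into which the state graph of the Directed Curveball algorithm decomposes, as given by the preceding theorem. Because the components are isomorphic, $|C_1|=\dots=|C_{2^k}|=|\Omega_S|/2^k$.

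Next I would analyse step (2). Each of the $k$ induced cycle sets is reoriented independently and uniformly (two choices per cycle), so the resulting realisation $G'$ is in a specific component determined by the chosen joint orientation of the $k$ cycles. Since, as noted after the preceding theorem, the $2^k$ components are in bijection with the $2^k$ possible combined orientations of the induced cycle sets, step (2) selects a component uniformly at random, i.e.\ each $C_\ell$ is entered with probability exactly $1/2^k$. (If $k=0$ there is a single component $C_1=\Omega_S$, and step (2) is vacuous.)

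Conditioning on being in component $C_\ell$, step (3) is just the Directed Curveball algorithm applied to a member of $C_\ell$. By the preceding theorem, the Directed Curveball chain restricted to any component converges to the uniform distribution on that component, so asymptotically every state $H\in C_\ell$ is returned with probability $1/|C_\ell|=2^k/|\Omega_S|$. Combining via the law of total probability, for any $H\in\Omega_S$ lying in the unique component $C_{\ell(H)}$,
\[
\Pr[\text{output}=H]=\Pr[\text{entering }C_{\ell(H)}]\cdot\Pr[\text{output}=H\mid C_{\ell(H)}]=\frac{1}{2^k}\cdot\frac{2^k}{|\Omega_S|}=\frac{1}{|\Omega_S|}.
\]

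The only real subtlety to argue cleanly is step (2)'s independence claim, namely that a uniformly random reorientation of each of the $k$ induced cycle sets of $G$ yields a uniformly random component. This follows from the facts, established in \cite{Berger2010} and reused in the preceding theorem, that induced cycle sets are pairwise disjoint (so their orientations can be flipped independently) and that the component containing a realisation is determined precisely by the joint orientation of its induced cycle triangles. Beyond this observation the argument is a direct conditioning computation, so I do not expect any significant obstacle.
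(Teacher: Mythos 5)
Your proposal is correct and takes essentially the same approach as the paper, which states the corollary with no written proof beyond the remark that the preceding theorem ``implies that choosing one component uniformly at random and applying the Directed Curveball algorithm on this component leads to a uniform distribution of all states.'' Your write-up simply makes explicit the two ingredients this relies on --- equal component sizes via the isomorphism of the $2^k$ components, and uniform component selection via the disjointness of the induced cycle sets --- followed by the product-probability computation.
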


We now propose a linear-time algorithm for the identification of all induced cycle sets in (1) of the Adjusted Directed Curveball algorithm. This approach follows a result of LaMar~\cite{LaMar2009}, which we describe in a different form and simplify. We first define the \emph{corrected Ferrers matrix} for a given degree sequence. Let $S:=(a_1,b_1)$, $\dots$, $(a_n,b_n)$ be a degree sequence in non-increasing lexicographical order. The $n \times n$ \emph{corrected Ferrers matrix} $F$ corresponding to $S$ is a matrix with row sums $b_1,\dots,b_n$. Each row $i$ consists of $b_i$ consecutive $1$'s followed by consecutive $0$'s with the exception that the diagonal elements $F_{ii}$ are always $0.$ This leads to column sums $f_1,\dots,f_n$ of $F.$ The classical result of Chen-Fulkerson-Ryser states that $S$ has a realisation if and only if $\sum_{i=1}^l (f_i-a_i)\geq 0$ for all $l \in \{1,\dots,n\}.$ For a comprehensive discussion we recommend the paper of Berger \cite{Berger2014}. We define $1_{x}:\mathbb{N} \mapsto \mathbb{N}$ as the function with $1_{x}(y)=1$ for $y=x$ and $1_{x}(y)=0$ in all other cases. LaMar~\cite{LaMar2009} stated the following result.

\begin{thm}[LaMar 2009, \cite{LaMar2009}] \label{TheoremLaMar}
Let $S=(a_1,b_1),\dots,(a_n,b_n)$ be a lexicographical non-increasing degree sequence with a directed graph as realisation, and $f_1,\dots,f_n$ the column sums of its corrected Ferrers matrix.\\
Let $\overline{S}=(b'_1,a'_1),\dots,(b'_n,a'_n)=(b_{\sigma(1)},a_{\sigma(1)}),\dots,(b_{\sigma(n)},a_{\sigma(n)})$ be a permutation of $S$ which was generated by exchanging the component order in all pairs and sorting it in non-increasing lexicographical order. Let $f'_1,\dots,f'_n$ be the column sums of its corrected Ferrers matrix. \\ Indices $i,i+1,i+2$ form an induced cycle set in $S$ if and only if 
\begin{enumerate}
\item $(a_i,b_i)=(a_{i+1},b_{i+1})=(a_{i+2},b_{i+2})=(k,i),$
\item $(b'_k,a'_k)=(b'_{k+1},a'_{k+1})=(b'_{k+2},a'_{k+1})=(i,k),$
\item $\sum_{i=1}^l (f_i-a_i)=1_i(l)+1_{i+1}(l)$ for $l \in \{i-1,\dots,i+2\},$
\item $\sum_{i=1}^l (f'_i-b'_i)=1_k(l)+1_{k+1}(l)$ for $l \in \{k-1,\dots,k+2\}.$
\end{enumerate} 
\end{thm}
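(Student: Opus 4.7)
The approach is to translate the combinatorial statement ``$\{i,i+1,i+2\}$ is an induced cycle set'' into the existence of \emph{forced} $0/1$ entries in the adjacency matrix of every realisation of $S$, and then to recognise these forced entries as exactly the content of conditions 1--4. Throughout I would exploit the row/column symmetry: conditions 1 and 3 are row-side statements about $S$, while conditions 2 and 4 are the identical row-side statements applied to $\overline{S}$, so it suffices to carry out the argument once.

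The technical core is a forced-entry lemma for the Fulkerson--Chen--Ryser bound. Given a realisable sequence ordered lexicographically non-increasingly, the partial sum $\sigma(l)=\sum_{i=1}^{l}(f_i-a_i)$ measures the degrees of freedom available to the top $l$ rows of any adjacency matrix. When $\sigma$ is small and grows by a prescribed increment at a specific $l$, a standard swap argument between any two realisations shows that certain entries must agree in all of them. I would prove (or invoke from Berger \cite{Berger2014}) that, whenever $\sigma(l-1)=c$ and $\sigma(l)=c$, the $l$-th row of every realisation must coincide with the $l$-th row of the corrected Ferrers matrix on a specific column range dictated by where the slack is consumed. The main obstacle lies precisely here: one must chase the canonical greedy construction and swap invariants carefully enough to pin down which cells are forced when the slack is $0$, $1$, or $2$, and to propagate the information through three consecutive rows with the diagonal correction in place.

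For the forward direction, if $\{i,i+1,i+2\}$ is an induced cycle set, then in every realisation the three vertices form a directed $3$-cycle, so each contributes exactly one out-arc and one in-arc inside the triangle. This symmetry forces the triples $(a_i,b_i)$, $(a_{i+1},b_{i+1})$, $(a_{i+2},b_{i+2})$ to agree; the lexicographic ordering together with the position of the three vertices pins the common in-degree to $i$, yielding a common value $(k,i)$. Applying the same reasoning to $\overline{S}$ gives condition 2 with matching offset $k$. Conditions 3 and 4 then follow by counting: the forced triangle consumes exactly the amount of slack described by the indicator pattern $1_i(l)+1_{i+1}(l)$, while any larger $\sigma$ value at these positions would enable a swap that reroutes an arc out of the triangle, contradicting the ``induced'' property.

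For the converse, assume conditions 1--4. Conditions 1--2 single out three twin out-indices at positions $i,i+1,i+2$ in $S$ and three twin in-indices at positions $k,k+1,k+2$ in $\overline{S}$. Feeding the tight partial sums of conditions 3--4 into the forced-entry lemma produces, in every realisation, a uniquely determined $3\times 3$ block on rows and columns $\{i,i+1,i+2\}$: the row-side tightness forces the out-neighbourhoods of these three vertices to include two vertices from the triple and $k-1$ vertices outside, and the column-side tightness forces the symmetric statement on in-neighbourhoods. The only $3\times 3$ pattern compatible with both constraints (and with the diagonal zeros) is a directed $3$-cycle on the three vertices. Hence the triple induces a directed cycle in every realisation, completing the equivalence.
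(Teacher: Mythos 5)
Your proposal is a plan rather than a proof: the entire technical content is delegated to a ``forced-entry lemma'' that you say you ``would prove (or invoke from Berger)'', and that lemma is precisely where all the work lies. Note also that the paper does not prove this theorem at all --- it cites LaMar and instead proves (in the Appendix) the strictly simpler Theorem~\ref{TheoremInducedCycles}, whose whole point is that your symmetry observation can be pushed further: conditions 2 and 4 are not merely the mirror images of 1 and 3 under $S\mapsto\overline{S}$, they are \emph{consequences} of 1 and 3, so only one side of the argument is ever needed. The appendix proof also shows concretely what your forced-entry lemma must deliver: tightness of $\sum_{j=1}^{l}(f_j-a_j)$ at $l=i-1$ and $l=i+2$ forces the row sums of any realisation $A$, restricted to columns $1,\dots,i-1$ and $1,\dots,i+2$, to coincide with those of the corrected Ferrers matrix (a deficit in one row would force a surplus in another, which is impossible since each row of $F$ has its $1$'s packed to the left and hence dominates the corresponding row of $A$ on every initial column segment). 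This pins the $3\times 3$ block on rows and columns $i,i+1,i+2$ to have row sum $1$, and a short case analysis using $f_i=k+1$, $f_{i+1}=k$, $f_{i+2}=k-1$ then shows the block must be a directed $3$-cycle. Your sketch never descends to this level, so the converse direction is not actually established.

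Two further concrete problems. In the converse you assert that row-side tightness forces each of the three out-neighbourhoods to contain \emph{two} vertices of the triple and $k-1$ outside; that gives out-degree $k+1$, contradicting both condition 1 and your own conclusion that the block is a directed $3$-cycle, in which each vertex has exactly \emph{one} out-neighbour inside the triple. In the forward direction, the claims that the three degree pairs must coincide, that the common in-degree must equal the positional index $i$, and that the partial sums take exactly the values $1_i(l)+1_{i+1}(l)$ are asserted from ``symmetry'' and ``the lexicographic ordering'' without argument; each requires a genuine swap argument (showing that any deviation would permit a switch rerouting an arc out of the triangle in some realisation), and none is supplied.
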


We state a simpler version of this theorem which is based on the observation that items (2) and (4) follow directly from items (1) and (3).

\begin{restatable}{thm}{TheoremInducedCycles}
\label{TheoremInducedCycles}
Let $S=(a_1,b_1),\dots,(a_n,b_n)$ be a lexicographical non-increasing degree sequence with a directed graph as realisation, and $f_1,\dots,f_n$ the column sums of its corrected Ferrers matrix. Indices $i,i+1,i+2$ form in $S$ an induced cycle set if and only if 
\begin{enumerate}
\item $(a_i,b_i)=(a_{i+1},b_{i+1})=(a_{i+2},b_{i+2})=(k,i),$
\item $\sum_{j=1}^l (f_j-a_j)=1_i(l)+1_{i+1}(l)$ for $l \in \{i-1,\dots,i+2\},$
\end{enumerate} 
\end{restatable}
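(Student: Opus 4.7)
The plan is to invoke Theorem~\ref{TheoremLaMar} and argue that its items (2) and (4) are redundant given items (1) and (3), so that Theorem~\ref{TheoremInducedCycles} characterises induced cycle sets with the two remaining conditions. The forward direction is then immediate: an induced cycle set at $\{i, i+1, i+2\}$ gives all four items of LaMar, in particular items (1) and (3), which are exactly the two items of Theorem~\ref{TheoremInducedCycles}. For the reverse direction, the substantive task is to derive items (2) and (4) of Theorem~\ref{TheoremLaMar} from items (1) and (2) of Theorem~\ref{TheoremInducedCycles}, after which LaMar's theorem applies directly.

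First I would extract explicit structural data from item (2). Telescoping the given values of $\sum_{j=1}^l(f_j - a_j)$ at $l \in \{i-1, i, i+1, i+2\}$ yields $f_i - a_i = 1$, $f_{i+1} - a_{i+1} = 1$, and $f_{i+2} - a_{i+2} = 0$. The Fulkerson--Chen--Ryser non-negativity, together with $\sum_{j=1}^{i-1}(f_j - a_j) = 0$, forces each partial sum with upper index below $i$ to vanish, so $f_j = a_j$ for all $j < i$. With item (1) giving $a_i = a_{i+1} = a_{i+2} = k$, this pins down the first $i+2$ column sums of the corrected Ferrers matrix of $S$.

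To derive item (2) of Theorem~\ref{TheoremLaMar}, observe that the three pairs $(k, i)$ at positions $i, i+1, i+2$ in $S$ become three copies of $(i, k)$ in the component-swapped sequence, and the claim reduces to showing that after lex non-increasing sorting they occupy positions $k, k+1, k+2$ in $\overline{S}$. Equivalently, exactly $k-1$ pairs $(b_j, a_j)$ strictly lex-precede $(i, k)$, i.e.\ have $b_j > i$, or have $b_j = i$ and $a_j > k$ (with $j \notin \{i, i+1, i+2\}$). Using the lex-ordering of $S$, the fixed values of $f_1, \ldots, f_{i+2}$, and a careful count of how each diagonal zero of $F$ shifts one contribution from column $j$ to column $b_j + 1$, one verifies that this count equals $k-1$ exactly.

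Item (4) of Theorem~\ref{TheoremLaMar} follows by applying the same type of argument to $\overline{S}$, exploiting the Gale--Ryser-style duality between the corrected Ferrers matrices of a directed sequence and its component-swap dual: the tight FCR identities for $S$ at positions $i-1, \ldots, i+2$ translate to the claimed tight identities for $\overline{S}$ at positions $k-1, \ldots, k+2$. The main obstacle throughout is bookkeeping the diagonal correction under the re-sorting step --- the swap-and-sort permutation depends on the lex order on pairs, and each diagonal zero in $F$ must be matched to the correct column in the Ferrers matrix of $\overline{S}$ after the permutation. Once this matching is made explicit, both items (2) and (4) of LaMar drop out of the column-sum data already extracted in the first step, and Theorem~\ref{TheoremLaMar} then yields the induced cycle set.
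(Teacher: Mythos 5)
Your overall strategy---reduce to Theorem~\ref{TheoremLaMar} by showing that its items (2) and (4) are consequences of items (1) and (3)---is a legitimate reading of the paper's prose, but it is not the route the paper takes, and as written your proposal has concrete gaps. First, the one computation you do carry out is wrong: telescoping $\sum_{j=1}^{l}(f_j-a_j)=1_i(l)+1_{i+1}(l)$ over $l\in\{i-1,i,i+1,i+2\}$ (partial-sum values $0,1,1,0$) gives $f_i-a_i=1$, $f_{i+1}-a_{i+1}=0$ and $f_{i+2}-a_{i+2}=-1$, i.e.\ $f_i=k+1$, $f_{i+1}=k$, $f_{i+2}=k-1$, not the values $1,1,0$ you state; since these feed your subsequent positional count, the error propagates. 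Second, your inference that ``each partial sum with upper index below $i$ vanishes, so $f_j=a_j$ for all $j<i$'' does not follow: non-negativity of the Chen--Fulkerson--Ryser partial sums together with the sum at $l=i-1$ being zero does not force the individual increments to vanish (increments $1,-1$ already give non-negative partial sums ending at $0$). Third, and most importantly, the two steps carrying all the weight of your reverse direction---that the three swapped pairs occupy positions $k,k+1,k+2$ of $\overline{S}$, and that the tight partial-sum identities for $\overline{S}$ hold at $l\in\{k-1,\dots,k+2\}$---are only asserted (``one verifies'', ``follows by applying the same type of argument''); the diagonal-correction bookkeeping you yourself identify as the main obstacle is precisely what a proof must supply, and it is absent.

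The paper avoids all of this by never touching $\overline{S}$ or its Ferrers matrix. It proves sufficiency directly: take an arbitrary adjacency matrix $A$ realising $S$ and show that conditions (1) and (2) force the $3\times 3$ submatrix on rows and columns $i,i+1,i+2$ to be a directed triangle. The key observations are that tightness of the partial sums at $l=i-1$ and $l=i+2$, combined with the fact that each row of the corrected Ferrers matrix $F$ dominates the corresponding row of $A$ on every column prefix, forces the restricted row sums of $A$ and $F$ over columns $1,\dots,i-1$ and over $1,\dots,i+2$ to coincide; that every row $\ell\notin\{i,i+1,i+2\}$ of $F$ restricted to columns $i,i+1,i+2$ must read $(1,1,1)$ or $(0,0,0)$; and that the one-unit discrepancy between $f_i=k+1$ and $a_i=k$ can then only be resolved inside the $3\times 3$ block, which must therefore be a triangle. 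Necessity is immediate from Theorem~\ref{TheoremLaMar}. If you wish to salvage your reduction, you must actually perform the lexicographic positional count in $\overline{S}$ and make the claimed duality precise; the paper's direct argument is self-contained and sidesteps that bookkeeping entirely.
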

\begin{proof}
The proof is given in the Appendix.
\end{proof}

This results in the following algorithm for detecting all induced cycle sets in linear time (which was also the case for Lamar's Theorem~\ref{TheoremLaMar}). (1) Sort $S$ in non-increasing lexicographical order, (2) determine the set $T$ of all triples $(i,i+1,i+2)$ fulfilling (1) in Theorem~\ref{TheoremInducedCycles}, (3) construct the corresponding Ferrers matrix for $S$, and (4) determine $s_l:=\sum_{i=1}^l (f_i-a_i)$ for $l \in \{1,\dots,n\}.$ If $(s_{i-1},s_i,s_{i+1},s_{i+2})=(0,1,1,0)$ for triple $t:=(i,i+1,i+2) \in T$ then $t$ is an induced cycle set. Step (1),(2),(4) can be done in $O(n)$ time. The construction of the Ferrers matrix needs $O(m)$ time where $m$ denotes the number of $1$'s in the matrix. In summary this algorithm leads to an asymptotic linear time.

\subsection{Undirected Curveball}
\label{sec:unbiased_simple_undir}
We now discuss the conditions under which the Undirected Curveball algorithm converges to the uniform distribution. We start by deriving its transition probabilities. 

\begin{lemma}
\label{lem:undir_cb_trans_prob}
Let $A$ and $B$ be two adjacency set representations of graphs with equal degree sequence. The transition probability $P_{AB}$ from $A$ to $B$, in the Undirected Curveball algorithm, is given by 
\[ P_{AB} = \begin{cases} 
		\frac{2}{n(n-1)} \left( \frac{s_i!s_j!}{(s_i + s_j)!} + \frac{s_k!s_l!}{(s_k + s_l)!} \right) & \mbox{if } A \mbox{ and } B \mbox{ differ by a trade of size one,}  \\
		& \mbox{between sets } A_i \mbox{ and } A_j, \mbox{ exchanging } k \mbox{ and }l,\\
		\frac{2}{n(n-1)} \frac{s_i!s_j!}{(s_i + s_j)!} & \mbox{if } A \mbox{ and } B \mbox{ differ in a trade of size more } \\ 
		& \mbox{than one between sets } A_i \mbox{ and } A_j,\\
		1 - \sum_{C, C \neq A} P_{AC} & \mbox{if } A = B, \\
		0            & \mbox{otherwise.} 
		\end{cases}\]   
with $s_i = |A_{i-j}|$, $s_j = |A_{j-i}|$, $s_k = |A_{k-l}|$ and $s_l = |A_{l-k}|$. In particular, $P_{AB} = P_{BA}$ for all states $A, B$.
\begin{proof}
When the adjacency sets $A$ and $B$ differ by a trade of size one between sets $A_i$ and $A_j$ involving indices $k$ and $l$, then they also differ by a trade of size one between sets $A_k$ and $A_l$ involving indices $i$ and $j$ (see Lemma \ref{lem:switch_equals_trade_undir}). Hence, we need to add the probabilities of selecting either one of these trades. When $A$ and $B$ differ in trade of size larger than one, there is a unique pair of sets that corresponds to this trade, hence we find the usual transition probability. 

To see that $P_{AB} = P_{BA}$, observe that just like in the Directed Curveball algorithm (see Lemma \ref{lem:prob_simple_dir}), a trade between sets $A_i$ and $A_j$ to form $B_i$ and $B_j$ implies that $|A_{i-j}| = |B_{i-j}|$ and $|A_{j-i}| = |B_{j-i}|$ since trades leave common elements invariant and do not alter the number of elements in each set. 
\end{proof}
\end{lemma}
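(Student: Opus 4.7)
The plan is to decompose each transition $A\to B$ by conditioning on (i) the unordered pair of sets selected in step (a) and (ii) the random shuffle performed in step (c), noting that step (c$^\prime$) is deterministic given the earlier steps and so contributes no extra probability factor. For a fixed pair $(A_i,A_j)$, the conditional contribution is computed exactly as in Lemma~\ref{lem:prob_simple_dir}: the pair is drawn with probability $2/(n(n-1))$, and the uniformly random redistribution of $A_{i-j}\cup A_{j-i}$ produces any prescribed partition with probability $s_i!\,s_j!/(s_i+s_j)!$.

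The delicate step---and the one where the Undirected Curveball algorithm departs from the directed case---is counting how many pairs can witness a given transition. I would use the following \emph{heavy-difference} observation: if the algorithm selects $(A_p,A_q)$, then $A_p$ and $A_q$ may change by several elements via step (c), but step (c$^\prime$) modifies every other set $A_r$ by at most a single swap (replacing one occurrence of $p$ by $q$ or vice versa). Consequently, if the trade has size at least two, then $|A_i\triangle B_i|\ge 4$ and $|A_j\triangle B_j|\ge 4$, which forces $\{p,q\}=\{i,j\}$ and yields the middle case of the formula. For a size-one trade exchanging $k$ and $l$ between $A_i$ and $A_j$, Lemma~\ref{lem:switch_equals_trade_undir} already supplies a second witness $(A_k,A_l)$ that exchanges $i$ and $j$; the ``mixed'' pairs $\{i,k\}, \{i,l\}, \{j,k\}, \{j,l\}$ can be ruled out because in each case the element that would need to be moved lies in the preserved part of its set (e.g.\ if $(A_i,A_k)$ were selected then $k\in A_i\cap\{k\}$ is explicitly removed from $A_{i-k}$ in step (b), so $k$ cannot leave $B_i$). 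Adding the two legitimate contributions gives the first case of the formula.

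Finally, $P_{AB}=P_{BA}$ reduces, as in Lemma~\ref{lem:prob_simple_dir}, to verifying $|B_{i-j}|=|A_{i-j}|$ and $|B_{j-i}|=|A_{j-i}|$ (and analogously for $k,l$ in the size-one case). This follows from the structural invariants $|B_i|=|A_i|$, $B_i\cap B_j=A_i\cap A_j$, and $j\in B_i\Leftrightarrow j\in A_i$: the first because trades preserve set sizes, the second because common neighbours lie outside the redistributed pool, and the third because $j$ itself is excluded from the pool in step (b). The main obstacle is the uniqueness-of-witness-pair argument in the size-$\ge 2$ case together with the correct enumeration of the two witnesses in the size-one case; once these are in hand, the rest is bookkeeping closely parallel to the directed version.
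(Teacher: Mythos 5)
Your proposal is correct and follows essentially the same route as the paper: per-pair probability $\frac{2}{n(n-1)}\cdot\frac{s_i!s_j!}{(s_i+s_j)!}$ as in the directed case, two witness pairs for a size-one trade via Lemma~\ref{lem:switch_equals_trade_undir}, a unique witness pair for larger trades, and symmetry from $|A_{i-j}|=|B_{i-j}|$. Your heavy-difference observation and the explicit exclusion of the mixed pairs $\{i,k\},\{i,l\},\{j,k\},\{j,l\}$ merely supply the justification for the witness count that the paper's proof asserts without detail (note only that for the pairs $\{i,l\}$ and $\{j,k\}$ the obstruction is that the element to be added is absent from the pool, rather than that the departing element is preserved).
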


\begin{thm}
For any graph $G$, the Markov chain of the Undirected Curveball algorithm starting at $G$ converges to the uniform distribution on all graphs with the same degree sequences as $G$.  
\begin{proof}
The state graph of the switching chain for graphs with fixed degree sequence is a subgraph of the state graph of the Undirected Curveball algorithm on the same states (Lemma~\ref{lem:switch_equals_trade_undir}). The state graph of the switching model was shown to be connected in \cite{Taylor1981,EggletonHolton1981} which implies the connectance of the state graph of the Undirected Curveball algorithm. The state graph of the Undirected Curveball algorithm is always non-bipartite, since there is a non-zero probability of repeating each state, due to trades of size zero. Finally $P_{AB} = P_{BA}$, see Lemma \ref{lem:undir_cb_trans_prob}. Hence by Theorem \ref{thm:MC_stationary} the Undirected Curveball algorithm converges to the uniform distribution on its state space. 
\end{proof}
\end{thm}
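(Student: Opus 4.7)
The plan is to apply Theorem~\ref{thm:MC_stationary} to the Markov chain of the Undirected Curveball algorithm restricted to the set of graphs with the same degree sequence as $G$. Since Lemma~\ref{lem:undir_cb_trans_prob} already gives $P_{AB} = P_{BA}$ for every pair of states $A, B$, the detailed balance equations $\pi(A) P_{AB} = \pi(B) P_{BA}$ reduce to $\pi(A) = \pi(B)$, and so the uniform distribution on the state space automatically satisfies them. It then remains only to verify that the state graph of the chain is connected and non-bipartite; the theorem then yields uniqueness of the stationary distribution and convergence to it.

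For connectedness I would invoke Lemma~\ref{lem:switch_equals_trade_undir}: every switch between two graphs with the same degree sequence can be realised as a trade of size one in the Undirected Curveball algorithm. Consequently, the state graph of the classical switching chain sits as a spanning subgraph inside the state graph of the Undirected Curveball chain on the same vertex set. Connectedness of the switching-chain state graph on graphs with a fixed degree sequence is a classical result~\cite{Taylor1981,EggletonHolton1981}, and such connectedness transfers upward to the Curveball state graph.

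Non-bipartiteness is handled by exhibiting a positive-probability self-loop at every state. For any choice of $A_i$ and $A_j$ in step (a), the probability that the shuffle in step (c) returns the same partition of $A_{i-j} \cup A_{j-i}$ that was already present in $A$ equals $s_i! s_j! / (s_i + s_j)!$, which is strictly positive (and equals one when $s_i = s_j = 0$). Step (c$'$) then leaves the configuration unchanged, so $P_{AA} > 0$ for every $A$, and the state graph has a loop at every vertex and is therefore non-bipartite.

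The three ingredients---symmetry of transitions (Lemma~\ref{lem:undir_cb_trans_prob}), connectedness via the switching-chain comparison (Lemma~\ref{lem:switch_equals_trade_undir} together with~\cite{Taylor1981,EggletonHolton1981}), and self-loops from trivial trades---combine via Theorem~\ref{thm:MC_stationary} to give convergence to the uniform distribution. No part is genuinely hard once the supporting lemmas and cited results are in hand. The one place where I would be most careful is the symmetry argument, because the additional Step (c$'$) creates two distinct trades (starting from $(A_i,A_j)$ or from $(A_k,A_l)$) that realise the same single-switch transition $A \to B$; Lemma~\ref{lem:undir_cb_trans_prob} handles this subtlety by bundling both contributions into a combined probability that still matches its reverse, so this is the main subtlety rather than a true obstacle.
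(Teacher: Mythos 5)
Your proposal is correct and follows essentially the same route as the paper's proof: connectedness via the switching-chain subgraph (Lemma~\ref{lem:switch_equals_trade_undir} plus \cite{Taylor1981,EggletonHolton1981}), non-bipartiteness via zero-size trades, symmetry from Lemma~\ref{lem:undir_cb_trans_prob}, and Theorem~\ref{thm:MC_stationary} to conclude. The extra care you take with the double-counting of size-one trades is exactly the subtlety the paper delegates to Lemma~\ref{lem:undir_cb_trans_prob}.
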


\subsection{The global directed Curveball algorithms}\label{sec_modified_curveball}

We start by deriving the transition probabilities for the Global Curveball algorithm of Subsection~\ref{sec:modified_curveball}. We first calculate the number of possible global trades for \emph{one} $2$-partition $P$, and then develop the number of possible transitions for \emph{all} $2$-partitions. This value will be taken as a basis for calculating the transition probabilities. In the following we denote by \emph{even partition} of a set $M:=\{1,\dots,n\}$ a $2$-partition $P=\{\{i_1,i_2\},\dots,\{i_{n-1},i_{n}\}\}$, and by \emph{odd partition} $P=\{\{i_1,i_2\},\dots,\{i_{n-2},i_{n-1}\},\{i_n\}\}$.  

\begin{lemma}\label{lem:number_global_trade}
Let $A$ be the adjacency set representations of a bipartite graph (digraph) with degree sequence $S$, and let $P$ be a $2$-partition of $M=\{1,\dots,n\}$ with $n=2k$ for even $n$, and $n=2k+1$ for odd $n$. The number $r(P)$ of global trades for $P$ in the Global Curveball chain is 
$$r(P)=\Pi_{j \in \{1,3,\dots,2k-1\}}\frac{s_{i_{j}}+s_{i_{j+1}}}{s_{i_{j}}!s_{i_{j+1}}!}$$
with $s_{i_{j}} = |A_{i_{j}-i_{j+1}}|$ and $s_{i_{j+1}} = |A_{i_{j+1}-i_{j}}|$.
\end{lemma}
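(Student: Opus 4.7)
The plan is to decompose a global trade into the $k$ independent local trades, one per pair of $P$, count the number of outcomes of a single local trade, and then multiply. Fix a $2$-partition $P$ of $M=\{1,\dots,n\}$ with pairs $(A_{i_{1}},A_{i_{2}}),\dots,(A_{i_{2k-1}},A_{i_{2k}})$ (plus an extra singleton $\{A_{i_n}\}$ when $n$ is odd). Because the pairs are disjoint as subsets of $M$, steps (b)–(c) applied to different pairs touch disjoint adjacency sets, so the outputs $(B_{i_{2j-1}},B_{i_{2j}})$ for different $j$ can be chosen independently, and distinct choices on different pairs yield distinct global outputs.

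Next I would count, for a single pair $(A_{i_{j}},A_{i_{j+1}})$, the number of possible outcomes of step (c). By construction, the intersection $A_{i_{j}}\cap A_{i_{j+1}}$ is preserved in $B_{i_{j}}$ (respectively in $B_{i_{j+1}}$), so the only freedom is how to redistribute the ``trading pool'' $A_{i_{j}-i_{j+1}}\cup A_{i_{j+1}-i_{j}}$, which has $s_{i_{j}}+s_{i_{j+1}}$ elements. Since $|B_{i_{j}}|=|A_{i_{j}}|$, exactly $s_{i_{j}}$ of the pool elements go into $B_{i_{j}}$ and the remaining $s_{i_{j+1}}$ go into $B_{i_{j+1}}$; different size-$s_{i_{j}}$ subsets of the pool produce different pairs $(B_{i_{j}},B_{i_{j+1}})$. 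Thus the number of outcomes for that pair equals
\[
\binom{s_{i_{j}}+s_{i_{j+1}}}{s_{i_{j}}}=\frac{(s_{i_{j}}+s_{i_{j+1}})!}{s_{i_{j}}!\,s_{i_{j+1}}!},
\]
which matches the factor appearing in $r(P)$.

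Finally I would assemble the count. For even $n$ there are exactly $k$ pairs, and by the independence observation above $r(P)$ is the product of the per-pair counts, giving the claimed formula. For odd $n$, the singleton $\{A_{i_{n}}\}$ contributes a factor of $1$ (no trade is made on a lone set), so the product over $j\in\{1,3,\dots,2k-1\}$ is again correct.

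I do not expect any serious obstacle here: the only point that needs care is the independence/disjointness argument, namely that distinct combinations of per-pair outcomes give genuinely distinct global states. This follows immediately because the pair $\{i_{j},i_{j+1}\}$ determines exactly which two adjacency sets are overwritten, and these index subsets are disjoint across different pairs of $P$, so no two pairs can interfere with each other's outputs.
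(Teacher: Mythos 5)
Your proof is correct and follows essentially the same route as the paper's: decompose a global trade into independent trades on the disjoint pairs of $P$ and multiply the per-pair counts (the paper simply cites \cite{CarstensPhysRevE} for the per-pair count that you derive explicitly, and likewise dismisses the odd singleton). One remark: your per-pair count $\binom{s_{i_j}+s_{i_{j+1}}}{s_{i_j}}=\frac{(s_{i_j}+s_{i_{j+1}})!}{s_{i_j}!\,s_{i_{j+1}}!}$ is the right one, so the numerator in the displayed formula for $r(P)$ is missing a factorial sign (a typo, consistent with $\nicefrac{1}{r(P)}$ being used as the product of the probabilities $\frac{s_i!s_j!}{(s_i+s_j)!}$ in Lemma~\ref{lem:modified_curveball_transitions}).
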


\begin{proof}
The number of global trades for one partition $P$ in the Global Curveball algorithm is the product of the number of trades for each of the randomly chosen pairs $(i_{j},i_{j+1})$, since trades for these pairs are applied independently. The number of trades for each pair of rows is the same as that in the original Curveball algorithm. This number was derived in \cite{CarstensPhysRevE} and our result now follows.
\end{proof}

We now discuss an example that shows that two different $2$-partitions and  corresponding global trades may result in the same change in the adjacency set representation. 

\begin{exmp}
Let $A$ be the following adjacency set representation of a bipartite graph (digraph) with self-loops: $A_1=[1,2]$, $A_2=[2,3]$, $A_3=[1,2]$ and $A_4=[2,3]$. Consider the following two different $2$-partitions $P=\{\{1,2\},\{3,4\}\}$ and $P'=\{\{1,4\},\{2,3\}\}$. It is easy to see that the set of global trades for both partitions is exactly the same.
\end{exmp}
 
This example leads us to derive the transition probabilities of the Global Curveball algorithm as follows. 

\begin{restatable}{lemma}{ModifiedCurveballTransition}
\label{lem:modified_curveball_transitions}
Let $A$ and $B$ be two adjacency set representations of bipartite graphs (digraphs) with equal degree sequence, and $\mathcal{P}$ the set of all $2$-partitions of $M=\{1,\dots,n\}$. The transition probability $P_{AB}$ from $A$ to $B$, in the Global Curveball algorithms, is given by 
\[ P_{AB} = \begin{cases} 
		\frac{1}{|\mathcal{P}|} \sum_{\{P \in \mathcal{P}~|~B \textnormal{ results from a global trade in A using P }\}}\frac{1}{r(P)} & \mbox{if } B \mbox{ differs from } A \\ & \mbox{by a global trade},\\
		1 - \sum_{C, C \neq A} P_{AC} & \mbox{if } A = B, \\
		0            & \mbox{otherwise.} 
		\end{cases}\]   
where $r(P)$ is the number of global trades for one partition $P$ in Lemma~\ref{lem:number_global_trade}. The number $\mathcal{P}$ of $2$-partitions is given by $\Pi_{k \in \{1,3,\dots,n-1\}}n-k$ for even partitions, and by $\Pi_{k \in \{0,2,\dots,n-1\}}n-k$ for odd partitions. In particular, $P_{AB} = P_{BA}$ for all $A, B$.
\end{restatable}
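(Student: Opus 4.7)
The plan is to establish the three claims in the statement — the transition-probability formula, the enumeration $|\mathcal{P}|$, and the symmetry $P_{AB}=P_{BA}$ — in that order, leveraging Lemma~\ref{lem:number_global_trade} and mimicking the bookkeeping already used in Lemma~\ref{lem:prob_simple_dir} at the level of pairs.

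\textbf{Transition formula.} I would unpack one step of the Global Curveball chain into two independent stages. Stage~1 draws a $2$-partition $P \in \mathcal{P}$ uniformly at random (probability $1/|\mathcal{P}|$). Stage~2, conditional on $P$, runs the single-pair Curveball/Directed-Curveball trade independently on every pair $\{i_j,i_{j+1}\}$; by the same counting argument as in \cite{CarstensPhysRevE}, the outcome on each pair is uniform over its $\binom{s_{i_j}+s_{i_{j+1}}}{s_{i_j}}$ possibilities, and because the pairs are disjoint, the joint outcome is uniform over the product space of size $r(P)$. Hence, given $P$, every particular global trade has conditional probability $1/r(P)$. A target state $B\neq A$ is reached via $P$ iff the per-pair restrictions of $(A,B)$ agree with one of these global trades; in that case $P$ contributes $\frac{1}{|\mathcal{P}|}\cdot\frac{1}{r(P)}$ to $P_{AB}$, and summing over compatible $P$ gives the stated formula. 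The case $A=B$ then falls out from completeness of probability.

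\textbf{Counting $2$-partitions.} For even $n$ I would argue inductively: element $1$ can be paired with any of the remaining $n-1$ elements, after which the problem reduces to counting $2$-partitions of $n-2$ elements. This yields $(n-1)(n-3)\cdots 3\cdot 1 = \prod_{k\in\{1,3,\dots,n-1\}}(n-k)$. For odd $n$ I first pick the singleton in $n$ ways and then apply the even case to the remaining $n-1$ elements, obtaining $n(n-2)(n-4)\cdots 1 = \prod_{k\in\{0,2,\dots,n-1\}}(n-k)$.

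\textbf{Symmetry.} Fix any $P$ that realises the transition $A\to B$ by a global trade. On each pair $\{i_j,i_{j+1}\}$, the trade is an invertible rearrangement of the symmetric-difference multiset $A_{i_j-i_{j+1}}\cup A_{i_{j+1}-i_j}$, so the reverse rearrangement, executed under the same $P$, takes $B$ back to $A$. This sets up a bijection between partitions compatible with $A\to B$ and those compatible with $B\to A$. Moreover, precisely as in the proof of Lemma~\ref{lem:prob_simple_dir} applied pairwise, a trade preserves $|A_{i_j-i_{j+1}}|$ and $|A_{i_{j+1}-i_j}|$, so $r_A(P)=r_B(P)$ for every such $P$. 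Each summand in the formula for $P_{AB}$ therefore matches the corresponding summand for $P_{BA}$, and $P_{AB}=P_{BA}$ follows.

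\textbf{Anticipated obstacle.} The main subtlety is not the calculation itself but the bookkeeping hinted at by the example immediately preceding the lemma: distinct partitions $P,P'\in\mathcal{P}$ can induce the very same net change $A\to B$. Because the formula is written as a sum over \emph{all} compatible $P$, there is no double counting as long as one is careful that the pair $(P,\text{global trade for }P)$ is what is uniquely drawn in one step of the chain — different $P$ give genuinely different random experiments even if some of their outcomes coincide. Once this is clearly stated the rest of the argument is routine and the symmetry argument transfers the pair-level reasoning of Lemma~\ref{lem:prob_simple_dir} without further complication.
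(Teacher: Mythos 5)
Your proposal is correct and follows essentially the same route as the paper: decompose a step into uniform choice of a $2$-partition followed by independent per-pair trades (each global trade compatible with $P$ having conditional probability $1/r(P)$), sum over compatible partitions, count $|\mathcal{P}|$ by induction, and obtain symmetry from the fact that a trade preserves $|A_{i_j-i_{j+1}}|$ and $|A_{i_{j+1}-i_j}|$ so the same partition realises $B\to A$ with the same weight. Your explicit remark that distinct partitions inducing the same net change are summed without double counting is a point the paper only gestures at via its preceding example, but the underlying argument is the same.
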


\begin{proof}
We prove the result for the bipartite and directed case simultaneously. Each partition in $\mathcal{P}$ corresponds to at most one global trade (see Lemma~\ref{lem:number_global_trade}) between $A$ and $B$. The probability of selecting a partition that corresponds to a global trade between $A$ and $B$ equals $\nicefrac{1}{|\mathcal{P}|}$. For each of these partitions, $P$, the probability of selecting the corresponding global trade between $A$ and $B$ equals $\nicefrac{1}{r(P)}$. 

We derive the number of $2$-partitions $|\mathcal{P}|$ in the Appendix.

Using the same arguments as in proof of Lemma~\ref{lem:prob_simple_dir} we get $P_{AB} = P_{BA}$ for all $A, B$ because $\nicefrac{1}{r(P)}$ is the probability of $\nicefrac{n}{2}$ independent trades in the directed Curveball algorithms. 
\end{proof}

Recalling that global trades correspond to a number of independent trades in the Curveball algorithm it follows that the state graph of the directed version of the Global Curveball algorithm decomposes in $2^k$ isomorphic components whenever induced cycle sets are contained in degree sequence $S.$ All results from subsection~\ref{sec:unbiased_simple} can be applied analogously leading to an \emph{adjusted version of the global directed algorithm} which samples uniform at random one isomorphic component and uses global trades to sample within this isomorphic component uniformly at random. We do not repeat all details from subsection~\ref{sec:unbiased_simple}. Instead we state the following theorem. 

\begin{thm}\label{thm:concergence_directed_modified}
If the state graph corresponding to the switching chain for directed fixed degree sequences is connected, then the Markov chain of the Global Directed Curveball chain converges to its stationary distribution, which is the uniform distribution. On the other hand, if the Markov chain of the switching model is not connected, then applying the Global Directed Curveball algorithm to any component converges to the uniform distribution on all states in this component.
\end{thm}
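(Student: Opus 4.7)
The plan is to mimic the proof of Theorem~\ref{thm:concergence_directed_notConnected} and invoke the fundamental Markov-chain theorem (Theorem~\ref{thm:MC_stationary}). The first step is to establish that the state graph of the Directed Curveball chain sits as a subgraph of the state graph of the Global Directed Curveball chain. Given any single Directed Curveball trade affecting only rows $A_i$ and $A_j$, I would exhibit a $2$-partition of $\{1,\dots,n\}$ that pairs $i$ with $j$ and pairs the remaining indices arbitrarily; assigning the given nontrivial trade to $(i,j)$ and trades of size zero to every other pair realizes exactly the same transition as a global trade. Composing with Lemma~\ref{lem:subgraph_simple_dir}, this in turn embeds the switching-chain state graph into that of the Global Directed Curveball chain, so connectedness transfers from the switching chain to the global chain whenever the former is connected.

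With connectedness in hand, I would verify the remaining hypotheses of Theorem~\ref{thm:MC_stationary}. Non-bipartiteness follows because the global trade in which every pair exchanges zero elements leaves every state fixed, so $P_{AA}>0$ produces a self-loop at each vertex of the state graph. Symmetry $P_{AB}=P_{BA}$ is already supplied by Lemma~\ref{lem:modified_curveball_transitions}. Hence the chain converges to its unique stationary distribution, and because the transition matrix is symmetric (and the state space finite), that stationary distribution is uniform, proving the first assertion.

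For the disconnected case arising from $k\ge 1$ induced cycle sets, I would parallel the argument given for the Directed Curveball. By construction, a global trade is nothing but $\lfloor n/2\rfloor$ independent directed Curveball trades executed in parallel, and no such trade can reorient a triangle belonging to an induced cycle set (otherwise the embedded switching move would do so, contradicting~\cite{Berger2010}). Consequently the state graph of the Global Directed Curveball chain decomposes into exactly the same $2^k$ isomorphic components as the state graph of the switching chain. Restricted to any one component, the graph is still connected (by the subgraph relation above), non-bipartite (self-loops persist), and the transition probabilities remain symmetric, so Theorem~\ref{thm:MC_stationary} applied componentwise yields the uniform distribution on each component.

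No genuinely delicate step is involved; the main care points are the explicit realization of a single Directed Curveball trade as a global trade (which requires that zero-size trades are always available between any pair of rows, as they are), and the observation that within one global trade the orientations of triangles in induced cycle sets are frozen because each constituent pairwise trade is itself a Directed Curveball trade.
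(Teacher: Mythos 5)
Your proposal is correct and follows essentially the same route the paper takes: the paper omits a detailed proof, merely noting that global trades are compositions of independent Directed Curveball trades and that all results of Subsection~\ref{sec:unbiased_simple} (subgraph embedding via Lemma~\ref{lem:subgraph_simple_dir}, non-bipartiteness via zero-size trades, symmetry via Lemma~\ref{lem:modified_curveball_transitions}, and the $2^k$-component decomposition from induced cycle sets) transfer verbatim. Your write-up simply makes explicit the details the paper declines to repeat, in particular the realization of a single trade as a global trade with zero-size trades on all other pairs.
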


\section{Mixing time and experimental stopping times}
\label{sec:mixing_times}
The most important question for practitioners as well as theoreticians is how many steps the (global) Curveball algorithms have to run from an initial probability distribution (where an initial state is taken from) to sample from a probability distribution which is close to the uniform distribution. This number is defined as the \emph{total mixing time}, i.e. the number $N$ of reiteration steps in the Curveball algorithms. 

The Curveball algorithm has experimentally been shown to run much faster than the switching algorithm \cite{Strona2014}. Although we do not know if the total mixing time of the Curveball algorithm is smaller than that of the switching model, we show that all of our proposed Curveball algorithms (see Section \ref{sec:CB_ext}) tend to run faster in experiments than the respective switching models.

\subsection{Experimental results}
We compared the mixing times of the Curveball algorithm, the Global Curveball algorithm and the switching model for a number of random and real networks. 

Our main interest is in comparing the asymptotic mixing times of these algorithms. For this reason, we want to measure the impact of the structure of the state graph on the mixing time while disregarding the impact of the different probabilities to repeat states, since the latter corresponds to a polynomial term in the asymptotic mixing time. 

In order to measure the impact of the increased number of neighbours for each graph in the Curveball algorithms as compared to the switching model, we altered the Markov chains of all algorithms slightly. Specifically, for the Curveball algorithms, when we select a row-pair for which non-zero trades exist, we ensure a non-zero trade is selected in step (c) of the algorithm, i.e. one chooses only a random subset $S$ of $A_i-j \cup A_j-i$ with $S \neq A_i-j$. In \cite{CarstensPhysRevE} it was shown that this 'Good-Shuffle Curveball algorithm' converges to the uniform distribution. It is straightforward to adjust those arguments to show that the adjusted Global Curveball algorithms still converge to the uniform distribution too. For the switching models, we adjust the algorithms such that they resemble the Curveball algorithms more closely. That is, we select a row-pair, and if non-zero trades exists we select a trade of size one at random. Again, this algorithm still converges to the uniform distribution, due to an argument similar to that for the Good-Shuffle Curveball algorithm. 

The perturbation scores \cite{Strona2014} between a current (directed) graph in the Markov chain and the initial (directed) graph computes the fraction of edges in which these two graphs differ. It hence provides a dissimilarity measure. We use the point where this perturbation score stabilizes as an estimate for the mixing time of the Markov chains. 

Figure \ref{fig:digraphs} shows our comparison of the algorithms for ten directed graphs. We randomise six \er networks $G(n,p)$ with 1000 vertices and varying probability $p \in \{0.05, 0.06, \dots 0.1\}$, three random networks generated using the simple preferential attachment model introduced by Albert and Barab\'{a}si \cite{Barabasi1999} with 1000 vertices and varying number of added edges $m \in \{1,2,3\}$ per step, and a real directed graph which represents a protein interaction network \cite{konect:2016:maayan-figeys,konect:figeys}.

Our main findings from these experiments are the following. The Directed Curveball algorithm converges much faster than the switching chain for the \er random networks and the real network. Furthermore, the Global Directed Curveball algorithm converges dramatically faster than the Directed Curveball algorithm for all networks. The Directed Curveball algorithm and switching chain have similar performance for the Albert Barab\'{a}si random networks. This can be explained by the fact that all vertices in this network have low out-degree (1, 2 or 3 respectively) and hence trades are of small size too. Furthermore, due to the power-law in-degree distribution, many of the edges will have the same target further limiting the size of trades. However, the Global Directed Curveball algorithm again drastically improves the convergence of the perturbation score as compared to the Directed Curveball algorithm. 

\begin{figure}[!htb]
\includegraphics[width=120mm]{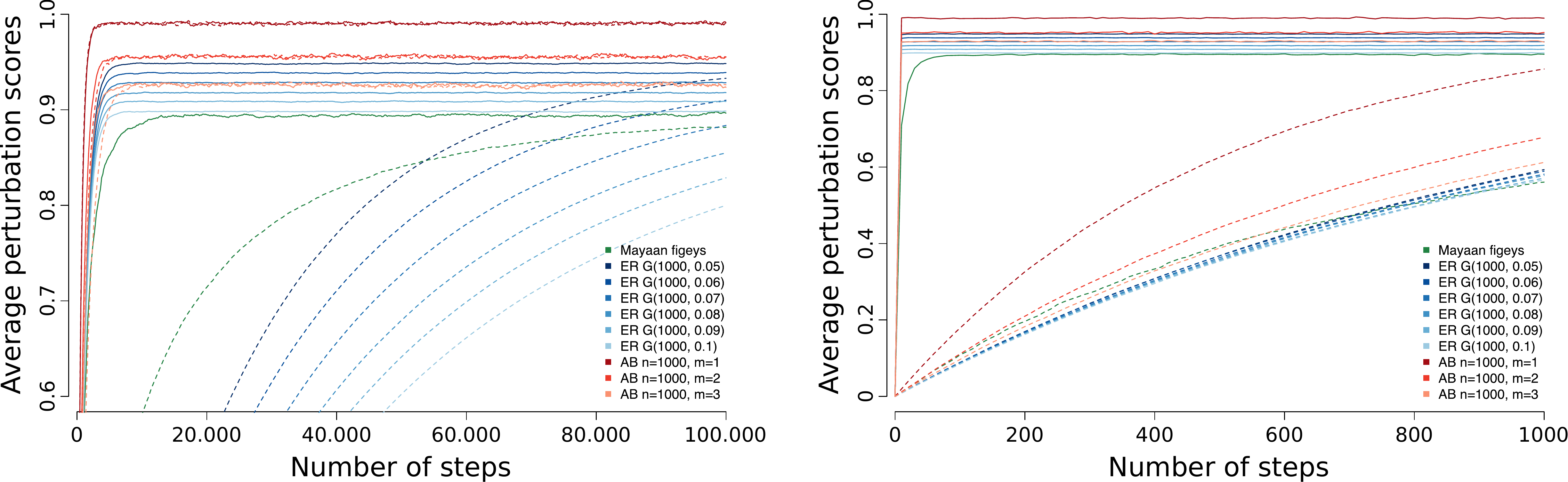}
\caption{\label{fig:digraphs}The perturbation scores of the Markov chains while randomising ten different directed graphs. On the left we compare the Directed Curveball algorithm (solid) to the Switching model (dashed), on the right we compare the Global Directed Curveball algorithm (solid) to the Directed Curveball algorithm (dashed). On the x-axis we plot the number of steps in the Markov chains, and on the y-axis the corresponding perturbation score. We run each Markov chain ten times. For the switching chain and the Directed Curveball algorithm we let $N=100.000$ steps and compute the average perturbation score over the ten runs for every 100th step. For the comparison of the Global Directed Curveball algorithm and the Directed Curveball algorithm we take just $N=$1000 steps and compute the average perturbation score over ten runs every 10th step.}
\end{figure}

Our findings for directed graphs with self-loops are identical to the findings for directed graphs and presented in the Appendix. 

Figure \ref{fig:graphs} shows our comparison of the Undirected Curveball algorithm and the switching chain for ten undirected graphs. We randomise six undirected \er networks $G(n,p)$ with 1000 vertices and varying probability $p \in \{0.05, 0.06, \dots 0.1\}$, three random networks generated using the simple preferential attachment model introduced by Albert and Barab\'{a}si with 1000 vertices and varying number of added edges $m \in \{1,2,3\}$ per step (we remove directionality from the edges), and a real graph which represents an online social network for hamster owners \cite{konect:2016:petster-friendships-hamster}. 

Our findings for graphs are very similar to our findings for directed graphs. The Directed Curveball algorithm converges much faster than the switching chain for the \er random networks and the real network. However, the algorithms have similar performance for the Albert Barab\'{a}si random networks.

\begin{figure}[!htb]
\includegraphics[width=60mm]{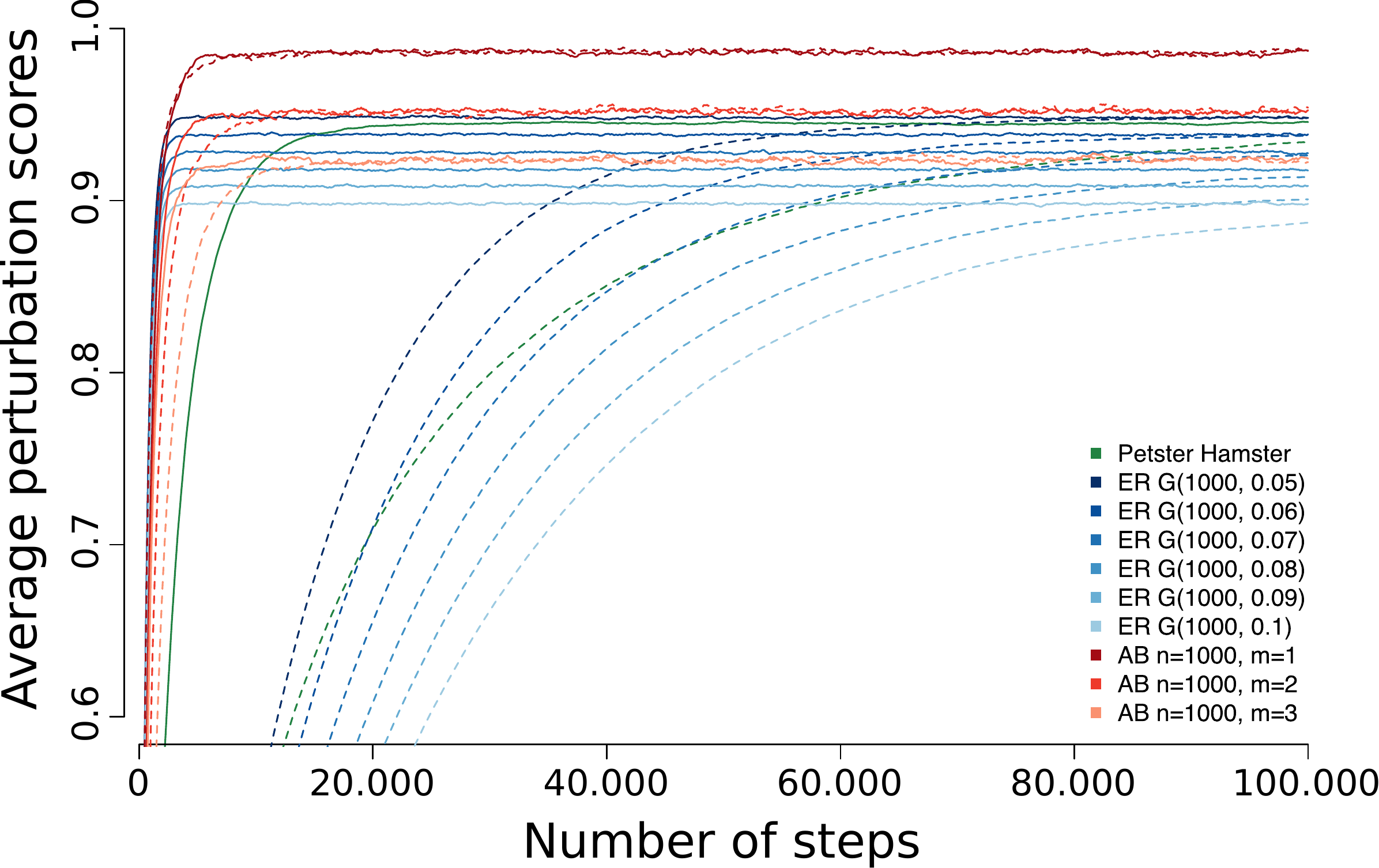}
\caption{\label{fig:graphs}The perturbation scores corresponding to the switching chain and the Undirected Curveball algorithm while randomising ten different graphs. We run each Markov chain ten times and let $N=100.000$. We compute the average perturbation score over the ten runs for every 100th step. The solid line corresponds to the Undirected Curveball algorithm and the dashed line to the switching chain.}
\end{figure}

All algorithms were implemented in the R programming language and are publicly available \cite{queenBNE_CB}.

\subsection{Theoretical questions}
Even though there is experimental evidence that the mixing time of the Curveball algorithms is much faster than that of switching models, there is currently no theoretical proof. There are few theoretical results about the rapid mixing of the switching model. For the special case of regular and semi-regular networks \cite{Kannan1994,Greenhill2011,Miklos2013}, the polynomial upper bound for the mixing time was found using a multi-commodity flow argument \cite{Jerrum1989,Sinclair1989}. These proofs rely on defining a special class of paths between all states in the state graph. Paths are chosen in such a way that the load on each edge (the number of paths it takes part in) is relatively small. The mixing time can then be bounded from above in terms of a product of these edge loads and the inverse of their transition probabilities.

Unfortunately the multi-commodity flow method can not be used to prove rapid mixing for the Curveball algorithm. The same method applied to the class of paths that was used in the switching chain cannot be used, the argument breaks down when estimating the transition probabilities. The reason for this is that the transition probabilities in the Curveball algorithm can be exponentially small with respect to the number of vertices $n$ in a network, leading to an exponential factor in the upper bound. 

We do not believe that the small transition probabilities are an \emph{actual} obstruction to fast mixing of the Curveball algorithms, since each state also has a corresponding exponential number of neighbouring states. The fastest mixing Markov chain on $N$ states has the complete graph as its state graph, with all transition probabilities equal to $\nicefrac{1}{N}$. With an exponentially large state space, these probabilities are also exponentially small. Intuitively, the Curveball algorithm is much closer to this optimal situation than the switching method. 

It appears that an altogether different method is needed to find a theoretical upper bound for the mixing time of the Curveball algorithm. This is a difficult, but important open problem. The Curveball algorithm seems to be a step in the right direction for the fast generation of random directed networks.

\section{Conclusion}
\label{sec:concl}

In this paper we introduced two extensions of the Curveball algorithm: the Directed Curveball algorithm and the Undirected Curveball algorithm. These algorithms were developed to randomise undirected and simple directed networks while fixing their degree sequence. 

It is important for random network models to sample without bias. We proved that both the Directed Curveball algorithm and the Undirected Curveball algorithm converge to the uniform distribution. Furthermore, experimental evidence shows that they do so much faster than the well-known switching models. We recommend the use of these models over that of the switching model, especially for large networks. 

We pointed out why current techniques can not be used for formal proof of rapid mixing of the Curveball algorithm. Developing new techniques and proving rapid mixing is an interesting open problem.

\begin{appendix} 

\section*{Appendix}

In the following we give an algorithm for computing a $2$-partition $P$ of a set $M=\{1,\dots,n\}$ uniformly at random. The basic idea is that there is always a pair of integers in each partition containing the minimum number of a set $M.$ The algorithm creates one pair with this number $i$ and chooses the partner $j$ randomly from $M\setminus\{i\}.$ It remains to find a $2$-partition of a set $M$ which doesn't contain $i$ and $j.$

\begin{algorithm}
    \caption{Uniform sampling of a $2$-Partition for $M=\{1,\dots,n\}$} 
		\label{AlgorithmTwoPartition}
	    \begin{algorithmic}[1]
      \REQUIRE Set $M:=\{1,\dots,n\}$.
      \ENSURE Uniform sampled $2$-partition $P=\{\{i_1,i_2\},\dots,\{i_{n-1},i_{n}\}\}$ of $M$ (for even $n$).
			        Uniform sampled $2$-partition $P=\{\{i_1\},\{i_2,i_3\},\dots,\{i_{n-1},i_{n}\}\}$ (for odd $n$).
			 \STATE Initialize $P:=\emptyset.$
			 \IF{n is odd} \STATE choose $i \in M$ at random and set $M \leftarrow M \setminus \{i\}$ and $P \leftarrow P \cup \{\{i\}\}$.			\ENDIF
      \WHILE{$|M|>2$}
				\STATE Choose the smallest number $i$ in $M$.
				\STATE Choose a random element $j$ in $M \setminus \{i\}.$
				\STATE $P \leftarrow P \cup \{\{i,j\}\}.$
				\STATE $M \leftarrow M \setminus \{i,j\}.$
			\ENDWHILE
    \end{algorithmic}
 \end{algorithm}

The while-loop in step (5) will be used at most $\nicefrac{n}{2}$ times. A careful implementation with $M$ as an initial increasing array of numbers $1,\dots,n$ requires for step (6) and deleting $i$ in (9), $O(1)$ time, for step (7) $O(1)$ time to choose $j$ \cite{Knuth:1998} and to delete it in (9). This leads to $O(n)$ time.

\TheoremInducedCycles*
\begin{proof}
We show that conditions 1.) and 2.) imply that $i,i+1,i+2$ is an induced cycle set. We prove that for any adjacency matrix $A$ corresponding to a realisation of sequence $S$, these two conditions lead to an induced cycle between vertices $i,i+1,i+2$. This shows that each possible realisation possesses such an induced cycle, and hence $i,i+1,i+2$ is an induced cycle set.

Let $A$ be any adjacency matrix corresponding to a realisation of $S$. Condition 2.) with $l=i-1$ states that the number of $1'$s in the first $i-1$ columns of $F$ and $A$ are equal. In other words, the number of $1'$s in all rows from column index 1 to $i-1$ are equal for $A$ and $F$. Observe that due to the construction of the Ferrers' matrix, the number of $1'$s in a row $j$ of $F$ from index 1 to $i-1$ must always be larger or equal to the number of $1'$s in the same row in $A$ from index 1 to $i-1$. Thus, the sequence of row sums $b^{(i-1)}_1,\dots,b^{(i-1)}_n$ for column indices $1$ to $i-1$ must be identical for matrix $A$ and $F$. (A smaller row sum in $A$ would imply another larger row sum in $A$). The same is true for the row sums $b^{(i+2)}_1,\dots,b^{(i+2)}_n$ for column indices from $1$ to $i+2$ due to condition 2.) with $l=i+2.$ 

Since $b_i=i$, $b_{i+1}=i$ and $b_{i+2}=i$ by condition 1.), we find $b^{(i-1)}_i=i-1$, $b^{(i-1)}_{i+1}=i-1$, $b^{(i-1)}_{i+2}=i-1$ and $b^{(i+2)}_i=i$, $b^{(i+2)}_{i+1}=i$, $b^{(i+2)}_{i+2}=i$. Hence the $3$x$3$-sub-matrices of $F$ and $A$ consisting of columns and rows $i,i+1,i+2$ have row sum $1$ for each row. The figure below depicts matrix $F$.

$F=$
\[
\begin{array}{llllllllllllllllll}
          &           && 1 && \ldots  && i             && i+1 && i+2             && \ldots && n   \\
					
1         & \LiKl{9} &~& \bf{0} &1& \ldots &1& \ob{1}        && \ob{1}    && \ob{1}        &&        &&   & \ReKl{9} & \rdelim\}{3}{0pt}[$\textnormal{type }(a)$]\\

\bigdots  &           && 1  &\bf{0}& \ldots &1& \mi{\myvdots} && \mi{\myvdots}       && \mi{\myvdots}                                                     \\
          &           &&  1&1& \ldots &\bf{0}&   \un{1}        && \un{1}       && \un{1}                                                            \\
i				  &           && 1  &1& \ldots&1&\ob{\bf{0}}        &&   \ob{1}     && \ob{0}                                                            &&        &&   & &b_i\\
i+1       &           && 1  &1& \ldots &1&   \mi{1} && \mi{\bf{0}} && \mi{0} &&        &&   & &b_{i+1}
                                                    \\
i+2      &           &&  1&1&\ldots  &1&    \un{1}        &&   \mi{0}     && \un{\bf{0}}&&        &&   & &b_{i+2}
\\
\bigdots  &           && ? &?&\ldots  &?&    \ob{0}        &&   \ob{0}     && \ob{0}   &&        &&   &           & \rdelim\}{3}{0pt}[$\textnormal{type }(b)$]\\                                                         
          &           && ? &?&\ldots  &?&     \mi{\myvdots} &&  \mi{\myvdots}      && \mi{\myvdots}                                                     \\
n         &           && ? &?&\ldots &?&      \un{0}        &&  \un{0}      && \un{0}                                                            \\
          &           &&   &&        && f_i          && f_{i+1}      &&f_{i+2}
\end{array}
\]

Combining conditions 2.) and 1.) we find that $f_i=a_i+1=k+1$, $f_{i+1}=a_{i+1}=k$, and $f_{i+2}=a_{i+2}-1=k-1$. Notice that these conditions imply that for any row $l$ of $F$ with $l \neq i, i+1, i+2$ the values of columns $F_{li}, F_{li+1}, F_{li+2}$ have to equal $1,1,1$ (type (a)) or $0,0,0$ (type (b)). If we allowed a row with $1,1,0$ then there has to be another row $0,0,1$, or two other rows $1,0,1$ and $0,1,1$, neither of which is possible for a Ferrers matrix. The same reason forbids $1,0,0$ as row. 

$A=$\[
\begin{array}{llllllllllllllllll}
          &           && 1 && \ldots  && i             && i+1 && i+2             && \ldots && n   \\
					
1         & \LiKl{9} &~& \bf{0} &1& \ldots &1& \ob{1}        && \ob{1}    && \ob{1}        &&        &&   & \ReKl{9} & \rdelim\}{3}{0pt}[$\textnormal{type }(a)$]\\

\bigdots  &           && 1  &\bf{0}& \ldots &1& \mi{\myvdots} && \mi{\myvdots}       && \mi{\myvdots}                                                     \\
          &           &&  1&1& \ldots &\bf{0}&   \un{1}        && \un{1}       && \un{1}                                                            \\
i				  &           && 1  &1& \ldots&1&\ob{\bf{0}}        &&   \ob{?}     && \ob{?}                                                            &&        &&   & &b_i\\
i+1       &           && 1  &1& \ldots &1&   \mi{?} && \mi{\bf{0}} && \mi{?} &&        &&   & &b_{i+1}
                                                    \\
 i+2      &           &&  1&1&\ldots  &1&    \un{?}        &&   \mi{?}     && \un{\bf{0}}&&        &&   & &b_{i+2}
\\
\bigdots  &           && ? &?&\ldots  &?&    \ob{0}        &&   \ob{0}     && \ob{0}   &&        &&   &           & \rdelim\}{3}{0pt}[$\textnormal{type }(b)$]\\                                                         
          &           && ? &?&\ldots  &?&     \mi{\myvdots} &&  \mi{\myvdots}      && \mi{\myvdots}                                                     \\
n         &           && ? &?&\ldots &?&      \un{0}        &&  \un{0}      && \un{0}                                                            \\
          &           &&   &&        && a_i          && a_{i+1}      &&a_{i+2}
\end{array}
\]

To create a realisation of column $i$ in matrix $A$ we need a $1$ less than in column $i$ of $F$ by condition 2.) with $\ell=i$. Let us assume that $A_{\ell,i}=0$ and $F_{\ell,i}=1$ with $\ell \neq i,i+1,i+2$. This is only possible when $F_\ell$ is of type (a). But then we have two different column sums $b^{(i+2)}_j$ in matrices $A$ and $F$ in contradiction to our observation above. 

Hence, we can conclude that either a) $\ell=i+1$ or b) $\ell=i+2$ ($\ell=i$ can be excluded because of the demanded diagonal entry $0$). For situation a) we find that $A_{i+1,i+2}=1$ so that $A'$ has row sum 1 for row $i+1$. Furthermore row $i$ has column sum 1 in $A'$ and hence $A_{i,i+1} = 1$ (if $A_{i,i+2} = 1$ then there has to be an index $\ell \neq i, i+1, i+2$ with $A_{l,i+2} = 0$ and $F_{l, i+2} =1$ which is again a contradiction). Similarly in situation b) we find $A_{i+2,i+1}=1$ and $A_{i,i+2}=1$. In both cases $A'$ corresponds to an induced cycle. 
\end{proof}

\ModifiedCurveballTransition*
\begin{proof}
The first part of this Lemma was already given in Section~\ref{sec_modified_curveball}. We prove the formula for $|\mathcal{P}|$ with induction on $n := |M|$. When $n=1$ and $n=2$ there is only one partition and hence $|\mathcal{P}|=1$ in both cases. When $n=3$ the $2$-partition is of the following form: $P=\{\{i_1,i_2\},\{i_3\}\}$. There are three possibilities to choose $i_3$, and $i_1$ and $i_2$ are forced by this choice. This results in $3$ possible $2$-partitions, hence $|\mathcal{P}|=3.$ When $n=4$ a $2$-partition is of the following form: $\{\{i_1,i_2\},\{i_3,i_4\}\}$. Now $i_1$ can be fixed as $i_1=1,$ because the number $1$ must be in one pair. Then there are $3$ possibilities to choose $i_2$, and after this choice $i_3$ and $i_4$ are settled. Hence, the number of $2$-partitions is $3$, i.e. $|\mathcal{P}|=3.$

Now let us assume that the claim is true for all $l \leq n-1.$ For a given $M:=\{1,\dots,n\}$ first assume $n$ is even. We can fix $i_1:=n$, because $n$ has to be in one of these pairs. For $i_2$ we have $n-1$ possible choices from $1$ to $n-1$. Let us denote this choice by $a$, i.e $i_2=a$. Now let $M_a$ equal $M\setminus \{a,n\}$. For each $|M_a|=n-2$ we can apply the induction hypothesis. Each of the $n-1$ $M_a$ can be combined with $\{n,a\}$ leading to a partition of $M$. Hence, we get for $M$, $|\mathcal{P}|=(n-1)\cdot(\Pi_{k \in \{1,3,\dots,n-3\}}n-2-k)=\Pi_{k \in \{1,3,\dots,n-1\}}n-k$. Finally, if $n$ is odd, we first need to choose an element $i_n$ randomly, and then we apply for the remaining even set $M\setminus \{i_n\}$ the formula for the even case.
\end{proof}

\begin{figure}[!htb]
\includegraphics[width=120mm]{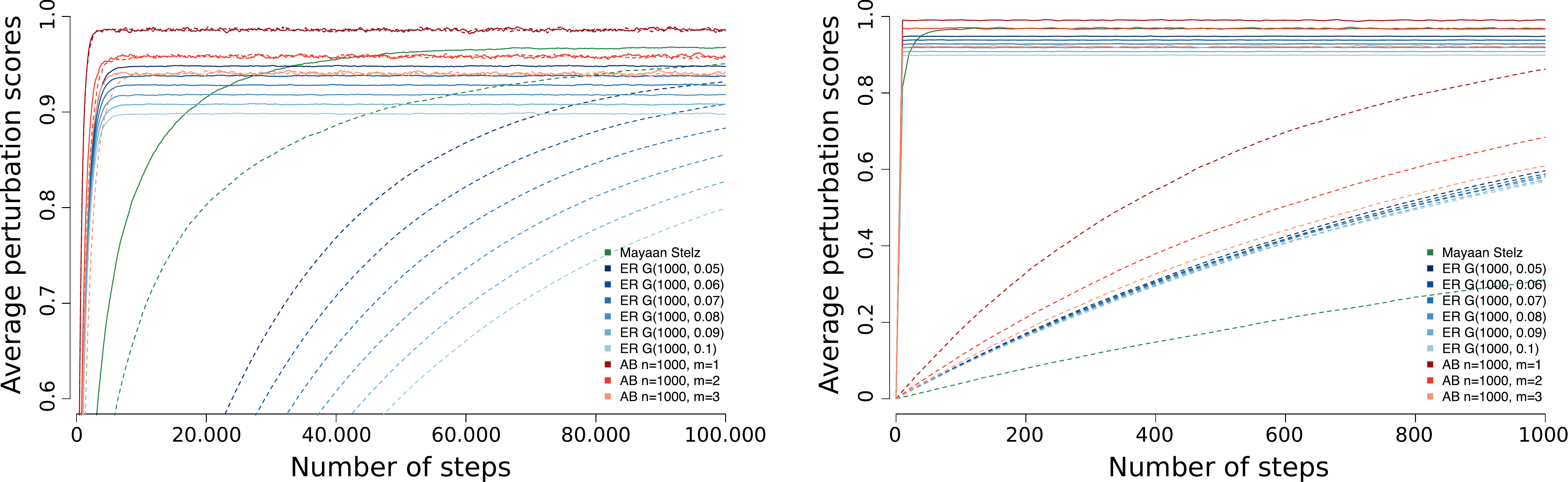}
\caption{The perturbation scores of the Markov chains while randomising ten different directed graphs with self-loops. On the left we compare the Curveball algorithm (solid) to the Switching model (dashed), on the right we compare the Global Curveball algorithm (solid) to the Curveball algorithm (dashed). On the x-axis we plot the number of steps in the Markov chains, and on the y-axis the corresponding perturbation score. We run each Markov chain ten times. For the switching chain and the Curveball algorithm we let $N=100.000$ steps and compute the average perturbation score over the ten runs for every 100th step. For the comparison of the Global Curveball algorithm and the Curveball algorithm we take just $N=$1000 steps and compute the average perturbation score over ten runs every 10th step.}
\end{figure}

\end{appendix}


\begin{thebibliography}{10}

\bibitem{queenBNE_CB}
https://github.com/queenBNE/Curveball.

\bibitem{konect:2016:petster-friendships-hamster}
Hamsterster friendships network dataset -- {KONECT}, October 2016.

\bibitem{konect:2016:maayan-figeys}
Human protein (figeys) network dataset -- {KONECT}, October 2016.

\bibitem{Artzy-Randrup2005}
Y.~Artzy-Randrup and L.~Stone.
\newblock Generating uniformly distributed random networks.
\newblock {\em Physical Review E}, 72:056708, 2005.

\bibitem{Barabasi1999}
A.-L. Barab{\'a}si and R.~Albert.
\newblock Emergence of scaling in random networks.
\newblock {\em Science}, 286(5439):509--512, 1999.

\bibitem{Berger2012}
A.~Berger.
\newblock The connection between the number of realizations for degree
  sequences and majorization.
\newblock {\em arXiv:1212.5443}, 2012.

\bibitem{Berger2014}
A.~Berger.
\newblock A note on the characterization of digraphic sequences.
\newblock {\em Discrete Mathematics}, 314:38 -- 41, 2014.

\bibitem{Berger2010}
A.~Berger and M.~M{\"{u}ller-Hannemann}.
\newblock Uniform sampling of digraphs with a fixed degree sequence.
\newblock In {\em Proceedings of the 36th International Conference on
  Graph-Theoretic Concepts in Computer Science}, pages 220--231.
  Springer-Verlag, 2010.
\newblock full version available as Preprint in Arxiv:0912.0685v3.

\bibitem{Bezakova2007}
I.~Bez\'{a}kov\'{a}, N.~Bhatnagar, and E.~Vigoda.
\newblock Sampling binary contingency tables with a greedy start.
\newblock {\em Random Structures \& Algorithms}, 30(1-2):168--205, 2007.

\bibitem{CarstensCompleNet2014}
C.~J. Carstens.
\newblock A uniform random graph model for directed acyclic networks and its
  effect on motif-finding.
\newblock {\em Journal of Complex Networks}, 2:419--430, 2014.

\bibitem{CarstensPhysRevE}
C.~J. Carstens.
\newblock Proof of uniform sampling of binary matrices with fixed row sums and
  column sums for the fast curveball algorithm.
\newblock {\em Physical Review E}, 91:042812, 2015.

\bibitem{Cooper2012}
C.~Cooper, M.~Dyer, and C.~Greenhill.
\newblock Corrigendum: Sampling regular graphs and a peer-to-peer networks.
\newblock {\em arXiv:1203.6111}, 2012.

\bibitem{EggletonHolton1981}
R.~B. Eggleton and D.~A. Holton.
\newblock Simple and multigraphic realizations of degree sequences.
\newblock In {\em Combinatorial Mathematics VIII}, pages 155--172. Springer
  Berlin Heidelberg, 1981.

\bibitem{Erdos16}
P.~L. Erd{\"{o}}s, I.~Mikl{\'{o}}s, and Z.~Toroczkai.
\newblock New classes of degree sequences with fast mixing swap markov chain
  sampling.
\newblock {\em CoRR}, abs/1601.08224, 2016.

\bibitem{konect:figeys}
R.~M. Ewing, P.~Chu, F.~Elisma, H.~Li, P.~Taylor, S.~Climie, Linda
  McBroom-Cerajewski, Mark~D. Robinson, L.~O'Connor, M.~Li, R.~Taylor,
  M.~Dharsee, Y.~Ho, A.~Heilbut, L.~Moore, S.~Zhang, O.~Ornatsky, Y.~V.
  Bukhman, M.~Ethier, Y.~Sheng, J.~Vasilescu, M.~Abu-Farha, J.-P.~P. Lambert,
  H.~S. Duewel, I.~I. Stewart, B.~Kuehl, K.~Hogue, K.~Colwill, K.~Gladwish,
  B.~Muskat, R.~Kinach, S.-L.~L. Adams, M.~F. Moran, G.~B. Morin,
  T.~Topaloglou, and D.~Figeys.
\newblock Large-scale mapping of human protein--protein interactions by mass
  spectrometry.
\newblock {\em Molecular Systems Biology}, 3, 2007.

\bibitem{gotelli2009}
NJ~Gotelli and GL~Entsminger.
\newblock Ecosim: Null models software for ecology. version 7. acquired
  intelligence inc. \& kesey-bear. jericho, vt 05465, 2009.

\bibitem{Greenhill2011}
C.~Greenhill.
\newblock A polynomial bound on the mixing time of a {Markov} chain for
  sampling regular directed graphs.
\newblock {\em The Electronic Journal of Combinatorics}, 18(1):P234, 2011.

\bibitem{Greenhill2015}
C.~Greenhill.
\newblock The switch markov chain for sampling irregular graphs: Extended
  abstract.
\newblock In {\em Proceedings of the Twenty-Sixth Annual ACM-SIAM Symposium on
  Discrete Algorithms}, pages 1564--1572, 2015.

\bibitem{Jerrum2003}
M.~Jerrum.
\newblock {\em Counting, Sampling and Integrating: Algorithms and Complexity}.
\newblock Birkh\"auser Verlag, Basel, Switzerland, 2003.

\bibitem{Jerrum1989}
M.~Jerrum and A.~Sinclair.
\newblock Approximating the permanent.
\newblock {\em {SIAM} Journal on Computing}, 18(6):1149--1178, 1989.

\bibitem{JerrumSinclairVigoda04}
M.~Jerrum, A.~Sinclair, and E.~Vigoda.
\newblock A polynomial-time approximation algorithm for the permanent of a
  matrix with nonnegative entries.
\newblock {\em Journal of the ACM}, 51:671--697, 2004.

\bibitem{Jungnickel1999}
D.~Jungnickel.
\newblock {\em Graphs, networks and algorithms}.
\newblock Springer Verlag, Heidelberg, 1999.

\bibitem{Kannan1994}
R.~Kannan.
\newblock Markov chains and polynomial time algorithms.
\newblock In {\em Foundations of Computer Science, 1994 Proceedings., 35th
  Annual Symposium on}, pages 656--671, 1994.

\bibitem{Knuth:1998}
D.~E. Knuth.
\newblock {\em The Art of Computer Programming, Volume 1: (2Nd Ed.) Sorting and
  Searching}.
\newblock Addison Wesley Longman Publishing Co., Inc., Redwood City, CA, USA,
  1998.

\bibitem{LaMar2009}
M.~D. LaMar.
\newblock On uniform sampling simple directed graph realizations of degree
  sequences.
\newblock {\em CoRR}, abs/0912.3834, 2009.

\bibitem{Levin2009}
D.~A. Levin, Y.~Peres, and E.~L. Wilmer.
\newblock {\em Markov chains and mixing times}.
\newblock American Mathematical Society, Providence, Rhode Island, 2009.

\bibitem{Lovasz1996}
L.~Lov{\'a}sz.
\newblock Random walks on graphs: A survey.
\newblock In {\em Combinatorics, Paul Erd\H{o}s is Eighty}, volume~2, pages
  353--397. J{\'a}nos Bolyai Mathematical Society, 1996.

\bibitem{Maslov2002}
S.~Maslov and K.~Sneppen.
\newblock Specificity and stability in topology of protein networks.
\newblock {\em Science}, 296:910--913, 2002.

\bibitem{Miklos2013}
I.~Mikl{\'{o}}s, P.~L. Erd{\"{o}}s, and L.~Soukup.
\newblock Towards random uniform sampling of bipartite graphs with given degree
  sequence.
\newblock {\em Electr. J. Comb.}, 20(1):P16, 2013.

\bibitem{MolloyReed1995}
M.~Molloy and B.~Reed.
\newblock A critical point for random graphs with a given degree sequence.
\newblock {\em Random Structures \& Algorithms}, 6(2-3):161--180, 1995.

\bibitem{NewmanStrogatzWatts2001}
M.~E.~J. Newman, S.~H. Strogatz, and D.~J. Watts.
\newblock Random graphs with arbitrary degree distributions and their
  applications.
\newblock {\em Physical Review E}, 64(2):026118, 2001.

\bibitem{Rao1996}
A.~R. Rao, R.~Jana, and S.~Bandyopadhyay.
\newblock A {M}arkov chain {M}onte {C}arlo method for generating random (0,
  1)-matrices with given marginals.
\newblock {\em Sankhya: The Indian Journal of Statistics, Series A},
  58:225--242, 1996.

\bibitem{Ryser1957}
H.~J. Ryser.
\newblock Combinatorial properties of matrices of zeros and ones.
\newblock {\em Canad J. Math.}, 9:371--377, 1957.

\bibitem{Sinclair1989}
A.~Sinclair and M.~Jerrum.
\newblock Approximate counting, uniform generation and rapidly mixing {Markov}
  chains.
\newblock {\em Information and Computation}, 82(1):93--133, 1989.

\bibitem{Strona2014}
G.~Strona, D.~Nappo, F.~Boccacci, S.~Fattorini, and J.~San-Miguel-Ayanz.
\newblock A fast and unbiased procedure to randomize ecological binary matrices
  with fixed row and column totals.
\newblock {\em Nature Communications}, 5:4114, 2014.

\bibitem{Taylor1981}
R.~Taylor.
\newblock {\em Combinatorial Mathematics VIII: Proceedings of the Eighth
  Australian Conference on Combinatoria Mathematics Held at Deakin University,
  Geelong, Australia, August 25--29, 1980}, chapter Constrained switchings in
  graphs, pages 314--336.
\newblock Springer Berlin Heidelberg, Berlin, Heidelberg, 1981.

\bibitem{Tutte52}
W.~T. Tutte.
\newblock The factors of graphs.
\newblock {\em Canad. J. Math. 4(1952), 314-328}, (4):314--328, 1952.

\bibitem{Verhelst2008}
N.~D. Verhelst.
\newblock An efficient {MCMC} algorithm to sample binary matrices with fixed
  marginals.
\newblock {\em Psychometrika}, 73(4):705--728, 2008.

\end{thebibliography}
\end{document}